% Integrals based on comonotonic modularity
%
% by Miguel Couceiro and Jean-Luc Marichal
%
% date:

\documentclass[10pt]{amsart}
\usepackage{amssymb,MnSymbol}
\usepackage{amsthm,amsmath,cite}
\usepackage{stmaryrd}

\title[]{Discrete integrals based on comonotonic modularity}

\author{Miguel Couceiro}
\address{Lamsade, University Paris-Dauphine \\
Place du Mar\'echal de Lattre de Tassigny, F-75775 Paris cedex 16, France}%
\email{miguel.couceiro[at]dauphine.fr}

\author{Jean-Luc Marichal}
\address{Mathematics Research Unit, FSTC, University of Luxembourg \\
6, rue Coudenhove-Kalergi, L-1359 Luxembourg, Luxembourg}%
\email{jean-luc.marichal[at]uni.lu}

\date{June 6, 2013}

\begin{document}

\theoremstyle{plain}
\newtheorem{theorem}{Theorem}%[section]% Supprimer [section] pour une numérotation linéaire
\newtheorem{lemma}[theorem]{Lemma}
\newtheorem{proposition}[theorem]{Proposition}
\newtheorem{corollary}[theorem]{Corollary}
\newtheorem{fact}[theorem]{Fact}
\newtheorem*{main}{Main Theorem}

\theoremstyle{definition}
\newtheorem{definition}[theorem]{Definition}
\newtheorem{example}[theorem]{Example}

\theoremstyle{remark}
\newtheorem*{conjecture}{Conjecture}
\newtheorem{remark}{Remark}
\newtheorem{claim}{Claim}

\newcommand{\N}{\mathbb{N}}
\newcommand{\R}{\mathbb{R}}
\newcommand{\B}{\mathbb{B}}
\newcommand{\bfzero}{\mathbf{0}}
\newcommand{\bfone}{\mathbf{1}}
\newcommand{\bfx}{\mathbf{x}}
\newcommand{\vect}[1]{\ensuremath{\mathbf{#1}}} % vector

\begin{abstract}
It is known that several discrete integrals, including the Choquet and Sugeno integrals as well as some
of their generalizations, are comonotonically modular functions.
Based on a recent description of the class of comonotonically modular functions,
we axiomatically identify more general families of discrete integrals that are comonotonically modular,
including signed Choquet integrals and symmetric signed Choquet integrals as well as natural
extensions of Sugeno integrals.
\end{abstract}

\keywords{Aggregation function, discrete Choquet integral, discrete Sugeno integral, functional equation, comonotonic additivity, comonotonic modularity, axiomatization}

\subjclass[2010]{Primary 39B22, 39B72; Secondary 26B35}

\maketitle

%---------------------------------------------------------------------------------------------- Section 1
\section{Introduction}

Aggregation functions arise wherever merging information is needed: applied and pure mathematics (probability, statistics, decision theory, functional equations), operations research, computer science, and many applied fields (economics and finance, pattern recognition and image processing, data fusion, etc.). For recent references, see Beliakov et al.~\cite{BelPraCal07} and Grabisch et al.~\cite{GraMarMesPap09}.

Discrete Choquet integrals and discrete Sugeno integrals are among the best known functions in aggregation theory,
mainly because of their many applications, for instance, in decision making
(see the edited book \cite{GraMurSug00}). More generally, signed Choquet integrals,
which need not be nondecreasing in their arguments, and the Lov\'asz extensions of pseudo-Boolean functions,
which need not vanish at the origin, are natural extensions of the Choquet integrals and have been thoroughly
investigated in aggregation theory. For recent references, see, e.g., \cite{CarCouGioMar11,CouMar11}.

%Likewise, lattice

The class of $n$-variable Choquet integrals has been axiomatized independently by means of two noteworthy
aggregation properties, namely comonotonic additivity (see, e.g., \cite{deCBol92}) and
horizontal min-additivity (originally called ``horizontal additivity'', see \cite{BenMesViv02}).
Function classes characterized by these properties have been recently described by the authors \cite{CouMar11}.
Quasi-Lov\'asz extensions, which generalize signed Choquet integrals and Lov\'asz extensions
by transforming the arguments by a 1-variable function, have also been recently investigated by the authors
\cite{CouMar11b} through natural aggregation properties.

Lattice polynomial functions and quasi-Sugeno integrals generalize the notion of Sugeno integral
\cite{CouMar09,CouMar10,CouMar1,CouMar2,CouMar0}: the former
by removing the idempotency requirement and the latter also by transforming arguments by a 1-variable function. Likewise, these functions have been axiomatized by means of well-known properties such as comonotonic maxitivity and comonotonic minitivity.

All of these classes share the feature that its members are comonotonically modular.
These facts motivated a recent study that led to a description of comonotonically modular functions \cite{CouMar11b}.
In this paper we survey these and other results and present a somewhat typological study of the
vast class of comonotonically modular functions, where we identify several families of discrete integrals
within this class using variants of homogeneity as distinguishing feature.

The paper is organized as follows. In Section \ref{signedChoq} we recall basic notions and terminology
 related to the concept of signed Choquet integral and present some preliminary characterization results.
 In Section~\ref{comodularity} we survey several results that culminate in a description of comonotonic modularity and
 establish connections to other well studied properties of aggregation functions. These results are then used in
 Section~\ref{COMDinteg} to provide characterizations of the various classes of functions considered in the
 previous sections as well as of classes of functions extending Sugeno integrals.

We employ the following notation throughout the paper. The set of permutations on $X=\{1,\ldots,n\}$ is denoted by $\mathfrak{S}_X$. For every $\sigma\in\mathfrak{S}_X$, we define
$$
\R^n_{\sigma} ~=~ \{\bfx=(x_1,\ldots,x_n)\in\R^n : x_{\sigma(1)}\leqslant\cdots\leqslant x_{\sigma(n)}\}.
$$
Let $\R_+=\left[0,+\infty\right[$ and $\R_-=\left]-\infty,0\right]$. We let $I$ denote a
nontrivial (i.e., of positive Lebesgue measure) real interval, possibly unbounded. We also introduce the notation $I_+=I\cap\R_+$, $I_-=I\cap\R_-$, and $I_{\sigma}^n=I^n\cap\R_{\sigma}^n$. For every $S\subseteq X$, the symbol $\mathbf{1}_S$ denotes the
$n$-tuple whose $i$th component is $1$, if $i\in S$, and $0$, otherwise. Let also $\mathbf{1}=\mathbf{1}_X$ and
$\mathbf{0}=\mathbf{1}_{\varnothing}$. The symbols $\wedge$ and $\vee$ denote the minimum and maximum functions, respectively. For every
$\bfx\in\R^n$, let $\bfx^+$ be the $n$-tuple whose $i$th component is $x_i\vee 0$ and let $\bfx^-=(-\bfx)^+$.
%For every function $f\colon I^n\to\R$, we define its diagonal section
%$\delta_f\colon I\to\R$ by $\delta_f(x)=f(x\mathbf{1})$. More generally, for every $S\subseteq X$, we define the function $\delta_f^S\colon
%I\to\R$ by $\delta_f^S(x)=f(x\mathbf{1}_S)$.
For every permutation $\sigma\in\mathfrak{S}_X$ and every $i\in X$, we set $S_{\sigma}^{\uparrow}(i)=\{\sigma(i),\ldots,\sigma(n)\}$, $S_{\sigma}^{\downarrow}(i)=\{\sigma(1),\ldots,\sigma(i)\}$, and $S_{\sigma}^{\uparrow}(n+1)=S_{\sigma}^{\downarrow}(0)=\varnothing$.

%---------------------------------------------------------------------------------------------- Section 2
\section{Signed Choquet integrals}\label{signedChoq}

In this section we recall the concepts of Choquet integral, signed Choquet integral, and symmetric signed Choquet integral. We also recall some axiomatizations of these function classes. For general background, see \cite{CarCouGioMar11,CouMar11,CouMar11b}

A \emph{capacity} on $X=\{1,\ldots,n\}$ is a set function $\mu\colon 2^X\to\R$ such that $\mu(\varnothing)=0$ and $\mu(S)\leqslant \mu(T)$ whenever
$S\subseteq T$.

\begin{definition}
The \emph{Choquet integral} with respect to a capacity $\mu$ on $X$ is the function $C_{\mu}\colon \R_+^n\to\R$ defined as
$$
C_{\mu}(\bfx) ~=~ \sum_{i=1}^nx_{\sigma(i)}\,\big(\mu(S_{\sigma}^{\uparrow}(i))-\mu(S_{\sigma}^{\uparrow}(i+1))\big){\,},\qquad \bfx\in (\R_+^n)_{\sigma},~\sigma\in\mathfrak{S}_X.
$$
\end{definition}

The concept of Choquet integral can be formally extended to more general set functions and $n$-tuples of $\R^n$ as follows. A \emph{signed capacity} on $X$ is a set function $v\colon 2^X\to\R$ such that $v(\varnothing)=0$.

\begin{definition}\label{de:SignedChoquet}
The \emph{signed Choquet integral} with respect to a signed capacity $v$ on $X$ is the function $C_v\colon \R^n\to\R$ defined as
\begin{equation}\label{eq:SignedChoquet}
C_v(\bfx) ~=~ \sum_{i=1}^nx_{\sigma(i)}\,\big(v(S_{\sigma}^{\uparrow}(i))-v(S_{\sigma}^{\uparrow}(i+1))\big){\,},\qquad \bfx\in \R^n_{\sigma},~\sigma\in\mathfrak{S}_X.
\end{equation}
\end{definition}

From (\ref{eq:SignedChoquet}) it follows that $C_v(\mathbf{1}_S)=v(S)$ for every $S\subseteq X$. Thus Eq.~(\ref{eq:SignedChoquet}) can be rewritten as
\begin{equation}\label{eq:SignedChoquet2}
C_{v}(\bfx) ~=~ \sum_{i=1}^nx_{\sigma(i)}\,\big(C_v(\mathbf{1}_{S_{\sigma}^{\uparrow}(i)})-C_v(\mathbf{1}_{S_{\sigma}^{\uparrow}(i+1)})\big){\,},
\qquad \bfx\in \R^n_{\sigma},~\sigma\in\mathfrak{S}_X.
\end{equation}

Thus defined, the signed Choquet integral with respect to a signed capacity $v$ on $X$ is the continuous function $C_v$ whose restriction to each region $\R^n_{\sigma}$ ($\sigma\in\mathfrak{S}_X$) is the unique linear function that coincides with $v$ (or equivalently, the corresponding pseudo-Boolean function $v\colon\{0,1\}^n\to\R$) at the $n+1$ vertices of the standard simplex $[0,1]^n\cap\R^n_{\sigma}$ of the unit cube $[0,1]^n$. As such, $C_v$ is called the \emph{Lov\'asz extension} of $v$.

%The concept of signed Choquet integral can be further generalized by considering arbitrary set functions (which need not vanish at the empty set). The \emph{Lov\'asz extension} of a set function $f\colon 2^X\to\R$ (or equivalently, a pseudo-Boolean function $f\colon\{0,1\}^n\to\R$) is the continuous function $\hat{f}\colon\R^n\to\R$ whose restriction to each region $\R^n_{\sigma}$, $\sigma\in\mathfrak{S}_X$, is the unique affine function that coincides with $f$ at the $n+1$ vertices of the standard simplex $[0,1]^n\cap\R^n_{\sigma}$ of the unit cube $[0,1]^n$. By affine function we mean a function $g\colon\R^n\to\R$ of the form $g(\bfx)=a_0+\sum_{i=1}^n a_i{\,}x_i$, where $a_0,\ldots,a_n\in\R$.
%
%The signed Choquet integral with respect to a signed capacity $v\colon 2^X\to\R$ is nothing other than the Lov\'asz extension of $v$. Thus, an $n$-variable function is a signed Choquet integral if and only if it is a Lov\'asz extension vanishing at the origin.

From this observation we immediately derive the following axiomatization of the class of $n$-variable signed Choquet integrals over a real interval $I$. A function $f\colon I^n\to\R$ is said to be a \emph{signed Choquet integral} if it is the restriction to $I^n$ of a signed Choquet integral.

\begin{theorem}[{\cite{CarCouGioMar11}}]
Assume that $0\in I$. A function $f\colon I^n\to\R$ satisfying $f(\mathbf{0})=0$ is a signed Choquet integral if and only if
$$
f(\lambda{\,}\bfx+(1-\lambda){\,}\bfx') ~=~ \lambda{\,}f(\bfx)+(1-\lambda){\,}f(\bfx'),\qquad \lambda\in [0,1],~\bfx,\bfx'\in I^n_{\sigma},~\sigma\in\mathfrak{S}_X.
$$
\end{theorem}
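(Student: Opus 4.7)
The plan is to use the convex-combination identity as an affinity statement on each comonotone cone, and to reconstruct a signed capacity from the resulting linear coefficients. The forward direction is immediate: formula (\ref{eq:SignedChoquet}) shows that $C_v$ is linear, hence affine, on each cone $\R^n_{\sigma}$, so the identity holds on each $I^n_{\sigma}$, and $C_v(\bfzero)=0$ is built into the definition.

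For the converse, the hypothesis says that $f$ is affine on the convex set $I^n_{\sigma}$ for each $\sigma\in\mathfrak{S}_X$. Since $I^n_{\sigma}$ has non-empty interior in $\R^n$, the restriction of $f$ to $I^n_{\sigma}$ is of the form $\bfx\mapsto \mathbf{a}^{\sigma}\cdot\bfx+b^{\sigma}$, and $\bfzero\in I^n_{\sigma}$ combined with $f(\bfzero)=0$ forces $b^{\sigma}=0$. Reindexing yields coefficients $c_i^{\sigma}$ with
\[
f(\bfx) \;=\; \sum_{i=1}^n c_i^{\sigma}\, x_{\sigma(i)}, \qquad \bfx\in I^n_{\sigma}.
\]
Set $V_i^{\sigma} := \sum_{j=i}^n c_j^{\sigma}$, so that $c_i^{\sigma}=V_i^{\sigma}-V_{i+1}^{\sigma}$ and $V_{n+1}^{\sigma}=0$. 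If $V_i^{\sigma}$ depends only on the set $S:=S_{\sigma}^{\uparrow}(i)$, one may define a signed capacity $v$ by $v(S):=V_i^{\sigma}$ (with $v(\varnothing)=0$); the telescoping identity combined with Definition~\ref{de:SignedChoquet} then gives $C_v=f$ on every $I^n_{\sigma}$, hence on $I^n$.

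The crux is therefore the consistency of $v$. Suppose $S_{\sigma}^{\uparrow}(i)=S_{\sigma'}^{\uparrow}(i)=S$ (the common index $i=n-|S|+1$ is forced by cardinality): both $\sigma$ and $\sigma'$ map $\{1,\ldots,i-1\}$ onto $X\setminus S$ and $\{i,\ldots,n\}$ onto $S$, so every vector $\bfx_{s,t}:=s\,\mathbf{1}_{X\setminus S}+t\,\mathbf{1}_S$ with $s\leqslant t$ in $I$ lies in $I^n_{\sigma}\cap I^n_{\sigma'}$. Evaluating $f(\bfx_{s,t})$ via the two linear formulas yields
\[
s\sum_{j=1}^{i-1}\bigl(c_j^{\sigma}-c_j^{\sigma'}\bigr)+t\,\bigl(V_i^{\sigma}-V_i^{\sigma'}\bigr)\;=\;0
\]
for all admissible $(s,t)$; keeping $s$ fixed and letting $t$ vary in the nontrivial interval $I$ forces $V_i^{\sigma}=V_i^{\sigma'}$. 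The main obstacle is precisely this consistency check: because $1$ need not belong to $I$, one cannot simply set $v(S):=f(\mathbf{1}_S)$, and the argument must instead manufacture enough comonotone test points from the weak assumptions that $0\in I$ and $I$ is nontrivial. Once consistency is in hand, the verification $C_v=f$ reduces to comparing coefficients cone by cone via Definition~\ref{de:SignedChoquet}.
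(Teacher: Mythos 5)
Your proof is correct and follows essentially the same route as the paper, which presents the result as an immediate consequence of the observation that $C_v$ is the unique function that is linear on each cone $\R^n_{\sigma}$ and agrees with $v$ at the vertices of the standard simplices (deferring details to the cited reference). The only substantive addition is your explicit consistency check for $v$ across cones via the test points $s\,\mathbf{1}_{X\setminus S}+t\,\mathbf{1}_S$, which correctly handles the fact that $1$ need not belong to $I$; that point is glossed over in the paper, and your treatment of it is sound.
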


The next theorem provides an axiomatization of the class of $n$-variable signed Choquet integrals based on comonotonic additivity, horizontal min-additivity, and horizontal max-additivity. Recall that two $n$-tuples $\bfx,\bfx'\in I^n$ are said to be \emph{comonotonic} if there exists $\sigma\in \mathfrak{S}_X$ such that $\bfx,\bfx'\in I^n_{\sigma}$. A function $f\colon I^n\to\R$ is said to be \emph{comonotonically
additive} if, for every comonotonic $n$-tuples $\bfx,\bfx'\in I^n$ such that $\bfx+\bfx'\in I^n$, we have
$$%\begin{equation}\label{eq:7sd6f8}
f(\bfx+\bfx')=f(\bfx)+f(\bfx').
$$%\end{equation}
A function $f\colon I^n\to\R$ is said to be
\emph{horizontally min-additive} (resp.\ \emph{horizontally max-additive}) if, for every $\bfx\in I^n$ and every $c\in I$ such that $\bfx-\bfx\wedge c\in I^n$ (resp.\ $\bfx-\bfx\vee c\in I^n$), we have
$$%\begin{equation}\label{eq:s76f8}
f(\bfx)=f(\bfx\wedge c)+f(\bfx-\bfx\wedge c)\qquad \big(\mbox{resp}.~f(\bfx)=f(\bfx\vee c)+f(\bfx-\bfx\vee c)\big).
$$%\end{equation}

\begin{theorem}[\cite{CouMar11}]\label{thm:saddsf687}
Assume $[0,1]\subseteq I\subseteq\R_+$ or $I=\R$. Then a function $f\colon I^n\to\R$ is a signed Choquet integral if and only if the following conditions hold:
\begin{enumerate}
\item[$(i)$] $f$ is comonotonically additive or horizontally min-additive (or horizontally max-additive if $I=\R$).

\item[$(ii)$] $f(c{\,}x\bfone_S)=c{\,}f(x\bfone_S)$ for all $x\in I$ and $c>0$ such that $c{\,}x\in I$ and all $S\subseteq X$.
\end{enumerate}
\end{theorem}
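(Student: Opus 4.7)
\emph{Proof plan.} The necessity direction is a short check from~(\ref{eq:SignedChoquet}): on each region $\R^n_{\sigma}$ the signed Choquet integral $C_v$ is linear, hence comonotonically additive, and a direct computation shows that $\bfx\wedge c$ and $\bfx-\bfx\wedge c$ (resp.\ $\bfx\vee c$ and $\bfx-\bfx\vee c$) are comonotonic with $\bfx$, so linearity on $\R^n_{\sigma}$ also yields horizontal min- and max-additivity; condition~$(ii)$ reduces to the evident equality $C_v(cx\bfone_S)=cx{\,}v(S)$.

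For the converse, I would set $v(S):=f(\bfone_S)$, which is legitimate in both regimes for $I$, and observe that $v(\varnothing)=f(\bfzero)=0$ follows from~$(ii)$ by taking $x=0$ there, which gives $f(\bfzero)=c{\,}f(\bfzero)$ for every $c>0$. The engine of the argument is the layer decomposition
\[
\bfx ~=~ \sum_{i=1}^n\big(x_{\sigma(i)}-x_{\sigma(i-1)}\big){\,}\bfone_{S_{\sigma}^{\uparrow}(i)},\qquad x_{\sigma(0)}:=0,
\]
valid for $\bfx\in I^n_{\sigma}$, in which all summands and all partial sums remain in $\R^n_{\sigma}$. When $I\subseteq\R_+$ every coefficient $c_i=x_{\sigma(i)}-x_{\sigma(i-1)}$ is nonnegative, so iterating comonotonic additivity along this decomposition and applying~$(ii)$ to each summand yields $f(\bfx)=\sum_{i=1}^n c_i{\,}v(S_{\sigma}^{\uparrow}(i))=C_v(\bfx)$. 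Under horizontal min-additivity the same identity is reached by successively splitting $\bfx$ at the thresholds $x_{\sigma(1)},x_{\sigma(2)},\ldots$ in increasing order: the first split peels off $x_{\sigma(1)}\bfone$, the second peels off $(x_{\sigma(2)}-x_{\sigma(1)})\bfone_{S_{\sigma}^{\uparrow}(2)}$, and so on, each piece being evaluated via~$(ii)$.

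The main obstacle is the case $I=\R$, where the coefficient $c_1=x_{\sigma(1)}$ may be negative whereas~$(ii)$ only provides scaling by positive constants. I would handle this by first proving the auxiliary identity $f(c\bfone)=c{\,}v(X)$ for every $c\in\R$: under horizontal min-additivity, apply the axiom to $\bfzero$ with threshold $c<0$ to obtain $0=f(c\bfone)+f(-c\bfone)$ and combine with~$(ii)$ applied to the second term; under comonotonic additivity, exploit that $-c_1\bfone$ is comonotonic with $\bfx$ and add it to $\bfx$, reducing to the already-handled nonnegative case. The horizontal max-additive case (admissible only for $I=\R$) is symmetric, using the thresholds $x_{\sigma(n)},x_{\sigma(n-1)},\ldots$ in reverse order. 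The genuinely delicate point throughout is the bookkeeping that every intermediate tuple remains comonotonic with $\bfx$ and stays inside $I^n$; this is automatic within each $\R^n_{\sigma}$ but must be verified once at the outset.
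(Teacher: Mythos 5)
This theorem is quoted from \cite{CouMar11} and the present paper gives no proof of it, so there is nothing internal to compare against; judged on its own, your plan is essentially the standard argument from the cited source and is sound. The telescoping decomposition $\bfx=\sum_i(x_{\sigma(i)}-x_{\sigma(i-1)})\bfone_{S_{\sigma}^{\uparrow}(i)}$, the reduction of the negative part via the auxiliary identity $f(c\bfone)=c\,f(\bfone)$ for $c<0$, and the observation that all intermediate tuples stay in $I^n_{\sigma}$ are exactly the right ingredients. Two small imprecisions are worth fixing. First, in the necessity part the identity $C_v(cx\bfone_S)=cx\,v(S)$ holds only for $x\geqslant 0$; for $x<0$ one has $C_v(x\bfone_S)=x\,v^d(S)$ by (\ref{eq:dual}), so condition $(ii)$ is better justified by noting that $C_v$ is positively homogeneous on each cone $\R^n_{\sigma}$ and that $x\bfone_S$ and $cx\bfone_S$ lie in a common cone. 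Second, in the horizontal max-additive case the telescoping naturally produces the values $f(-\bfone_{S})$ rather than $f(\bfone_S)$, so the capacity you recover is not $v(S)=f(\bfone_S)$ but its dual-type counterpart; this is harmless for the statement (any signed capacity yields a signed Choquet integral), but you should say explicitly that you end up exhibiting $f$ as $C_u$ for a possibly different signed capacity $u$, rather than claiming the case is literally symmetric.
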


\begin{remark}\label{rem:saddsf687}
It is easy to see that condition $(ii)$ of Theorem~\ref{thm:saddsf687} is equivalent to the following simpler condition: $f(x\bfone_S)=\mathrm{sign}(x){\,}x{\,}f(\mathrm{sign}(x){\,}\bfone_S)$ for all $x\in I$ and $S\subseteq X$.
\end{remark}

%\begin{remark}
%Theorem~\ref{thm:saddsf687} still holds if Condition $(ii)$ is replaced with the following weaker condition: Each of the maps $\delta_f$ and $\delta_f^S|_{I_+}$ $(S\subseteq X)$ is continuous at a point or monotonic or Lebesgue
%measurable or bounded from one side on a set of positive measure. If $I=\R$, then in the latter condition the set $I_+$ can be replaced by $I_-$.
%\end{remark}

We now recall the concept of symmetric signed Choquet integral. Here ``symmetric'' does not refer to invariance under a permutation of variables but rather to the role of the origin of $\R^n$ as a symmetry center with respect to the function values.

\begin{definition}\label{de:SymSignedChoquet}
Let $v$ be a signed capacity on $X$. The \emph{symmetric signed Choquet integral with respect to} $v$
is the function $\check{C}_v\colon \R^n\to\R$ defined as
\begin{equation}\label{eq:SymSignedChoquet}
\check{C}_v(\bfx) ~=~ C_v(\bfx^+)-C_v(\bfx^-),\qquad \bfx\in\R^n.
\end{equation}
\end{definition}

Thus defined, a symmetric signed Choquet integral is an odd function in the sense that $\check{C}_v(-\bfx)=-\check{C}_v(\bfx)$. It is then not difficult to show that the restriction of $\check{C}_v$ to $\R^n_{\sigma}$ is the function
\begin{eqnarray}
\check{C}_v(\bfx) &=& \sum_{i=1}^p
x_{\sigma(i)}\,\big(C_v(\mathbf{1}_{S_{\sigma}^{\downarrow}(i)})-C_v(\mathbf{1}_{S_{\sigma}^{\downarrow}(i-1)})\big)\nonumber\\
&& \null +\sum_{i=p+1}^n
x_{\sigma(i)}\,\big(C_v(\mathbf{1}_{S_{\sigma}^{\uparrow}(i)})-C_v(\mathbf{1}_{S_{\sigma}^{\uparrow}(i+1)})\big),\qquad
\bfx\in\R^n_{\sigma},\label{eq:sdfsfd65dsf}
\end{eqnarray}
where the integer $p\in\{0,\ldots,n\}$ is given by the condition $x_{\sigma(p)}<0\leqslant x_{\sigma(p+1)}$, with the convention that $x_{\sigma(0)}=-\infty$ and $x_{\sigma(n+1)}=+\infty$.

%On can show \cite{CouMar11} that, for any signed capacity $v$ on $X$, we have $\check{C}_v=C_v$ if and only if $C_v(-\bfx)=-C_v(\bfx)$ for every $\bfx\in\R^n_+$, or equivalently, for every $\bfx\in\R^n_-$.

The following theorem provides an axiomatization of the class of $n$-variable symmetric signed Choquet integrals based on horizontal median-additive additivity. Assuming that $I$ is centered at $0$, recall that a function $f\colon I^n\to\R$ is said to be \emph{horizontally median-additive} if, for every $\bfx\in I^n$ and every $c\in I_+$, we have
\begin{equation}\label{eq:sd98f7}
f(\bfx)=f\big(\mathrm{med}(-c,\bfx,c)\big)+f\big(\bfx-\bfx\wedge c\big)+f\big(\bfx-\bfx\vee (-c)\big),
\end{equation}
where $\mathrm{med}(-c,\bfx,c)$ is the $n$-tuple whose $i$th component is the middle value of $\{-c,x_i,c\}$. Equivalently, a function $f\colon I^n\to\R$ is horizontally median-additive if and only if its restrictions to $I^n_+$ and $I^n_-$ are comonotonically additive and
$$
f(\bfx) ~=~ f(\bfx^+) + f(-\bfx^-),\qquad \bfx\in I^n.
$$

A function $f\colon I^n\to\R$ is said to be a \emph{symmetric signed Choquet integral} if it is the restriction to $I^n$ of a symmetric signed Choquet integral.

\begin{theorem}[\cite{CouMar11}]\label{thm:saddsf687a}
Assume that $I$ is centered at $0$ with $[-1,1]\subseteq I$. Then a function $f\colon I^n\to\R$
is a symmetric signed Choquet integral if and only if the following conditions hold:
\begin{enumerate}
\item[$(i)$] $f$ is horizontally median-additive.

\item[$(ii)$] $f(c{\,}x\bfone_S)=c{\,}f(x\bfone_S)$ for all $c,x\in I$ such that $c{\,}x\in I$ and all $S\subseteq X$.
\end{enumerate}
\end{theorem}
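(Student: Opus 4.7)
The plan is to prove necessity directly from the definition of $\check{C}_v$, and to reduce sufficiency to Theorem~\ref{thm:saddsf687} applied separately on the nonnegative orthant $I_+^n$.

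For necessity, assume $f=\check{C}_v|_{I^n}$. Using the equivalent reformulation of horizontal median-additivity given just before the theorem, I only need to note that the restrictions of $C_v$ to $\R_+^n$ and to $\R_-^n$ are comonotonically additive (directly from (\ref{eq:SignedChoquet})) and that $\check{C}_v(\bfx)=C_v(\bfx^+)-C_v(\bfx^-)=\check{C}_v(\bfx^+)+\check{C}_v(-\bfx^-)$. For (ii), I combine the positive homogeneity of $C_v$ with the oddness $\check{C}_v(-\bfx)=-\check{C}_v(\bfx)$ to cover every sign combination of $c$ and $x$.

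For sufficiency, assume (i) and (ii). Setting $x=0$ with $c\neq 1$ in (ii) yields $f(\mathbf{0})=0$. Define $g,h\colon I_+^n\to\R$ by $g(\bfx)=f(\bfx)$ and $h(\bfx)=f(-\bfx)$. I would verify that both satisfy the hypotheses of Theorem~\ref{thm:saddsf687} on $I_+^n$ (which is admissible since $[0,1]\subseteq I_+\subseteq\R_+$). Comonotonic additivity of $g$ on $I_+^n$ is given directly by the equivalent form of (i); for $h$ one observes that $\bfx,\bfx'\in I_+^n$ lie in the same $\R_{\sigma}^n$ iff $-\bfx,-\bfx'\in I_-^n$ lie in the same $\R_{\bar\sigma}^n$ (with $\bar\sigma(i)=\sigma(n{+}1{-}i)$), so comonotonic additivity of $f$ on $I_-^n$ transfers to $h$. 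For the positive-homogeneity condition of Theorem~\ref{thm:saddsf687}, $g(cx\bfone_S)=cg(x\bfone_S)$ is (ii) directly, while $h(cx\bfone_S)=f(c(-x)\bfone_S)=cf(-x\bfone_S)=ch(x\bfone_S)$ follows from (ii) applied with the argument $-x\in I$. Theorem~\ref{thm:saddsf687} then yields signed capacities $v_1$ and $v_2$ on $X$ such that $g=C_{v_1}|_{I_+^n}$ and $h=C_{v_2}|_{I_+^n}$.

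To finish, I would identify $v_2=-v_1$ by applying (ii) with $c=-1$ and $x=1$: $v_2(S)=h(\bfone_S)=f(-\bfone_S)=-f(\bfone_S)=-v_1(S)$. Since a signed Choquet integral is determined by its capacity, $h=-g$ on $I_+^n$. For an arbitrary $\bfx\in I^n$, the equivalent form of (i) now gives
\[
f(\bfx)\,=\,f(\bfx^+)+f(-\bfx^-)\,=\,g(\bfx^+)+h(\bfx^-)\,=\,C_{v_1}(\bfx^+)-C_{v_1}(\bfx^-)\,=\,\check{C}_{v_1}(\bfx),
\]
so $f$ is a symmetric signed Choquet integral. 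The main obstacle I anticipate is the bookkeeping of how negation reverses the permutation witnessing comonotonicity, and the dual use of condition (ii) with negative $c$ to simultaneously produce the positive-homogeneity hypothesis of Theorem~\ref{thm:saddsf687} for $h$ and to force $v_2=-v_1$; the rest is essentially a direct application of Theorem~\ref{thm:saddsf687}.
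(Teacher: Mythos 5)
Your proposal is essentially correct, but note that the paper itself offers no proof of this statement: Theorem~\ref{thm:saddsf687a} is imported verbatim from \cite{CouMar11}, so there is no internal argument to compare against. Your route --- splitting $f$ into $g=f|_{I_+^n}$ and $h(\bfx)=f(-\bfx)$ and invoking Theorem~\ref{thm:saddsf687} on $I_+^n$ twice, then gluing via $f(\bfx)=f(\bfx^+)+f(-\bfx^-)$ --- is a clean and valid reduction, and the identification $v_2=-v_1$ via condition $(ii)$ with $c=-1$, $x=1$ correctly forces the symmetric form $\check{C}_{v_1}(\bfx)=C_{v_1}(\bfx^+)-C_{v_1}(\bfx^-)$ rather than the dual-capacity decomposition in (\ref{eq:dual}). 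The bookkeeping you worried about (negation reversing the witnessing permutation) is handled correctly.

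Two small points deserve more care than your sketch gives them. First, your claim that the homogeneity hypothesis of Theorem~\ref{thm:saddsf687} for $g$ ``is (ii) directly'' is not quite literal: Theorem~\ref{thm:saddsf687}$(ii)$ quantifies over all $c>0$ with $c{\,}x\in I$, whereas your hypothesis $(ii)$ only allows $c\in I$; when $I$ is bounded these do not coincide. The gap closes immediately by first passing to the simpler equivalent form $f(x\bfone_S)=x{\,}f(\bfone_S)$ of Remark~\ref{rem:saddsf687a} (which follows from $(ii)$ with $x=1$ since $[-1,1]\subseteq I$) and then deducing $f(c{\,}x\bfone_S)=cx{\,}f(\bfone_S)=c{\,}f(x\bfone_S)$ for arbitrary $c>0$ with $cx\in I$; the same remark repairs the corresponding step for $h$. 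Second, in the necessity direction you should verify comonotonic additivity of $\check{C}_v$ (not of $C_v$) on $I_-^n$; this follows from $\check{C}_v(\bfx)=-C_v(-\bfx)$ on $\R_-^n$ together with comonotonic additivity of $C_v$ on $\R_+^n$, i.e., the same negation argument you use in the sufficiency part. Neither point is a genuine gap, but both should be spelled out in a final write-up.
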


\begin{remark}\label{rem:saddsf687a}
It is easy to see that condition $(ii)$ of Theorem~\ref{thm:saddsf687a} is equivalent to the following simpler condition: $f(x\bfone_S)=x{\,}f(\bfone_S)$ for all $x\in I$ and $S\subseteq X$.
\end{remark}

We end this section by recalling the following important formula. For every signed capacity $v$ on $X$, we have
\begin{equation}\label{eq:dual}
C_v(\bfx) ~=~ C_v(\bfx^+)-C_{v^d}(\bfx^-),\qquad \bfx\in\R^n,
\end{equation}
where $v^d$ is the capacity on $X$, called the \emph{dual} capacity of $v$, defined as $v^d(S)=v(X)-v(X\setminus S)$.

%---------------------------------------------------------------------------------------------- Section 3
\section{Comonotonic modularity}\label{comodularity}

Recall that a function $f\colon I^n\to\R$ is said to be \emph{modular} (or a \emph{valuation}) if
\begin{equation}\label{eq:sdf75}
f(\bfx)+f(\bfx')=f(\bfx\wedge\bfx')+f(\bfx\vee\bfx')
\end{equation}
for every $\bfx,\bfx'\in I^n$, where $\wedge$ and $\vee$ are considered componentwise. It was proved \cite{Top78} that a function $f\colon I^n\to\R$ is modular if and only if it
is \emph{separable}, that is, there exist $n$ functions $f_i\colon I\to\R$ ($i=1,\ldots,n$) such that $f=\sum_{i=1}^nf_i$. In particular, any $1$-variable function $f\colon I\to\R$
is modular.

More generally, a function $f\colon I^n\to\R$ is said to be \emph{comonotonically modular} (or a \emph{comonotonic valuation}) if (\ref{eq:sdf75}) holds for every
comonotonic $n$-tuples $\bfx,\bfx'\in I^n$; see \cite{CouMar11b,MesMes11}. It was shown \cite{CouMar11b} that a
function $f\colon I^n\to\R$ is comonotonically modular if and only if it is \emph{comonotonically separable}, that is,
for every $\sigma\in \mathfrak{S}_X$, there exist functions $f^{\sigma}_i\colon I\to\R$ ($i=1,\ldots,n$) such that
$$
f(\bfx) ~=~ \sum_{i=1}^nf^{\sigma}_i(x_{\sigma(i)}) ~=~ \sum_{i=1}^nf^{\sigma}_{\sigma^{-1}(i)}(x_i),\qquad \bfx\in I^n_{\sigma}.
$$

We also have the following important definitions. For every $\bfx\in\R^n$ and every $c\in\R_+$ (resp.\ $c\in\R_-$) we denote by $[\bfx]_c$ (resp.\ $[\bfx]^c$) the $n$-tuple whose $i$th component is $0$, if $x_i\leqslant c$ (resp.\ $x_i\geqslant c$), and $x_i$, otherwise. Recall that a function $f\colon I^n\to\R$, where $0\in I\subseteq\R_+$, is \emph{invariant under horizontal min-differences} if, for every $\bfx\in I^n$ and every $c\in I$, we have
\begin{equation}\label{eq:HminDif}
f(\bfx)-f(\bfx\wedge c)=f([\bfx]_c)-f([\bfx]_c\wedge c).
\end{equation}
Dually, a function $f\colon I^n\to\R$, where $0\in I\subseteq\R_-$, is \emph{invariant under horizontal max-differences} if, for every $\bfx\in I^n$ and every $c\in I$, we have
\begin{equation}\label{eq:HmaxDif}
f(\bfx)-f(\bfx\vee c)=f([\bfx]^c)-f([\bfx]^c\vee c).
\end{equation}

The following theorem provides a description of the class of functions which are comonotonically modular.

\begin{theorem}[{\cite{CouMar11b}}]\label{thm:wqer87we}
Assume that $I\ni 0$. For any function $f\colon I^n\to\R$, the following assertions are equivalent.
\begin{enumerate}
\item[$(i)$] $f$ is comonotonically modular.

\item[$(ii)$] $f|_{I_+^n}$ is comonotonically modular (or invariant under horizontal min-differences), $f|_{I_-^n}$ is
comonotonically modular (or invariant under horizontal max-differences), and we have $f(\bfx)+f(\bfzero)=f(\bfx^+)+f(-\bfx^-)$ for every $\bfx\in I^n$.

\item[$(iii)$] There exist $g\colon I_+^n\to\R$ and $h\colon I_-^n\to\R$ such that,
for every $\sigma\in\mathfrak{S}_X$ and every $\bfx\in I^n_{\sigma}$,
$$
f(\bfx) ~=~ f(\bfzero)+\sum_{i=1}^p\big
(h(x_{\sigma(i)}\mathbf{1}_{S_{\sigma}^{\downarrow}(i)})-h(x_{\sigma(i)}\mathbf{1}_{S_{\sigma}^{\downarrow}(i-1)})\big)+ \sum_{i=p+1}^n\big
(g(x_{\sigma(i)}\mathbf{1}_{S_{\sigma}^{\uparrow}(i)})-g(x_{\sigma(i)}\mathbf{1}_{S_{\sigma}^{\uparrow}(i+1)})\big),
$$
where $p\in\{0,\ldots,n\}$ is such that $x_{\sigma(p)}<0\leqslant x_{\sigma(p+1)}$, with the convention that $x_{\sigma(0)}=-\infty$ and $x_{\sigma(n+1)}=+\infty$. In this case, we can choose $g=f|_{I_+^n}$ and
$h=f|_{I_-^n}$.
\end{enumerate}
\end{theorem}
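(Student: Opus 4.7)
The plan is to prove the cycle of implications (i) $\Rightarrow$ (ii) $\Rightarrow$ (iii) $\Rightarrow$ (i), treating the bracketed equivalences in (ii) (comonotonic modularity versus horizontal min/max-difference invariance on a one-signed domain) as facts that would be established earlier in the body of the paper.

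For (i) $\Rightarrow$ (ii), the restrictions to $I_+^n$ and $I_-^n$ clearly inherit comonotonic modularity, since the property is tested only on pairs of comonotonic $n$-tuples in the subdomain. For the splitting identity, I observe that $\bfzero \in I_\sigma^n$ for every $\sigma\in\mathfrak{S}_X$, so $\bfzero$ is comonotonic with every $\bfx\in I^n$. A coordinatewise check gives $\bfzero\vee\bfx = \bfx^+$ and $\bfzero\wedge\bfx = -\bfx^-$, and then applying (\ref{eq:sdf75}) to the pair $(\bfzero,\bfx)$ yields $f(\bfzero)+f(\bfx)=f(\bfx^+)+f(-\bfx^-)$.

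For (ii) $\Rightarrow$ (iii), fix $\sigma\in\mathfrak{S}_X$, $\bfx\in I_\sigma^n$, and the pivot $p$ with $x_{\sigma(p)}<0\leqslant x_{\sigma(p+1)}$. The splitting identity reduces the task to expressing $g(\bfx^+)-g(\bfzero)$ and $h(-\bfx^-)-h(\bfzero)$ as the desired positive and negative telescoping sums, where $g=f|_{I_+^n}$ and $h=f|_{I_-^n}$. Note that $\bfx^+$ lies in $I_\sigma^n\cap I_+^n$, equals $x_{\sigma(i)}$ on coordinate $\sigma(i)$ for $i>p$, and vanishes elsewhere. Applying (\ref{eq:HminDif}) to $\bfx^+$ with $c=x_{\sigma(p+1)}$, a componentwise computation shows $\bfx^+\wedge c = x_{\sigma(p+1)}\bfone_{S_\sigma^{\uparrow}(p+1)}$ and $[\bfx^+]_c\wedge c = x_{\sigma(p+1)}\bfone_{S_\sigma^{\uparrow}(p+2)}$, while $[\bfx^+]_c$ coincides with $\bfx^+$ except that coordinate $\sigma(p+1)$ is zeroed. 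Rearranging (\ref{eq:HminDif}) then extracts the first term of the telescoping; iterating with $c=x_{\sigma(p+2)},\ldots,x_{\sigma(n)}$ (the argument remains in $I_\sigma^n\cap I_+^n$ at each stage) and terminating at $g(\bfzero)$ gives the positive sum. The negative sum is obtained symmetrically by applying horizontal max-difference invariance to $-\bfx^-\in I_\sigma^n\cap I_-^n$ with the decreasing thresholds $c=x_{\sigma(p)},\ldots,x_{\sigma(1)}$, which naturally produces the downset indexing $S_\sigma^{\downarrow}(i)$.

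For (iii) $\Rightarrow$ (i), the displayed formula realizes $f$ on each region $I_\sigma^n$ as an explicit sum of $n$ terms, each depending on a single coordinate: the expressions $g(x_{\sigma(i)}\bfone_{S_\sigma^{\uparrow}(i)})$ and $g(x_{\sigma(i)}\bfone_{S_\sigma^{\uparrow}(i+1)})$ are $1$-variable functions of $x_{\sigma(i)}$, and the $h$-terms are analogous. Hence $f$ is comonotonically separable, so by the characterization recalled just before the theorem it is comonotonically modular. The main obstacle is step (ii) $\Rightarrow$ (iii): one must verify that the iterated horizontal min/max-difference invariance glues consistently at the boundary $c=0$ (so that the positive and negative halves agree with the splitting identity at the pivot $p$) and that ties among the $x_{\sigma(i)}$, which would zero out several coordinates at a single iteration, are absorbed correctly into the telescoping bookkeeping so that the final formula remains valid.
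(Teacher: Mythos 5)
The paper does not actually prove this theorem: it is recalled verbatim from \cite{CouMar11b}, so there is no in-paper argument to compare against. Your reconstruction follows what is essentially the original route — split at the origin via modularity applied to the comonotonic pair $(\bfx,\bfzero)$, telescope on each orthant using horizontal min/max-difference invariance, and deduce the converse from comonotonic separability — and the individual steps you spell out are correct. The two issues you flag at the end are genuinely the only loose ends, and both are routine: ties cause no harm because when $x_{\sigma(i)}=x_{\sigma(i+1)}=c$ the terms $g(c\,\bfone_{S_{\sigma}^{\uparrow}(i+1)})$ that your single application of (\ref{eq:HminDif}) ``skips'' cancel anyway inside the telescoping sum, so the collapsed step produces exactly the sum of the corresponding individual steps; and the gluing at $c=0$ is immediate since both half-telescopes terminate at $f(\bfzero)$ and the splitting identity reassembles them. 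The one thing your proof defers that is strictly part of the statement is the parenthetical equivalence in (ii) between comonotonic modularity of $f|_{I_+^n}$ and invariance under horizontal min-differences (and dually on $I_-^n$); as written you only use the horizontal-difference form, so to be self-contained you should either cite that equivalence or note that the same telescoping can be run directly from comonotonic modularity by applying (\ref{eq:sdf75}) to the comonotonic pair $x_{\sigma(i)}\bfone_{S_{\sigma}^{\uparrow}(i)}$ and the restriction of $\bfx^+$ to $S_{\sigma}^{\uparrow}(i+1)$.
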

%
% !!!!!!!!!!!!!!NEWWWW!!!!!!!!!!!!!!!!!!!!!!
%
% \begin{remark}
% The assumption $I\ni 0$ is not ...
% \end{remark}

We finish this section with remarks on some properties subsumed by comonotonic modularity, namely
the following relaxations of maxitivity and minitivity properties.

Recall that a function $f\colon I^n\to\R$ is said to be \emph{maxitive} if
\begin{equation}\label{eq:sdf75max}
f(\bfx\vee\bfx')~=~f(\bfx)\vee f(\bfx'),\qquad\bfx,\bfx'\in I^n,
\end{equation}
 and it is said to be \emph{minitive}  if
\begin{equation}\label{eq:sdf75min}
f(\bfx\wedge\bfx')~=~f(\bfx)\wedge f(\bfx'),\qquad\bfx,\bfx'\in I^n.
\end{equation}
As in the case of modularity, maxitivity and minitivity give rise to noteworthy
decompositions of functions into maxima and minima, respectively, of $1$-variable functions.

In the context  of Sugeno integrals (see Section \ref{COMDinteg}), de Campos et al.~\cite{deCBol92} proposed
the following comonotonic variants of these properties.
A function $f\colon I^n\to\R$ is said to be
 \emph{comonotonic maxitive} (resp.\ \emph{comonotonic minitive}) if (\ref{eq:sdf75max}) (resp.\ (\ref{eq:sdf75min}))
 holds for any two comonotonic $n$-tuples $\bfx,\bfx'\in I^n$. It was shown in \cite{CouMar2} that any of these
 properties implies nondecreasing monotonicity, and it is not difficult to observe that comonotonic maxitivity together with comonotonic minitivity imply
 comonotonic modularity; the converse is not true (e.g., the arithmetic mean).

 Explicit descriptions of each one of these properties was given in \cite{CouMar09} for functions over bounded chains.
 For the sake of self-containment, we present these descriptions here. To this end, we now assume that $I=[a,b]\subseteq \R$, and
 for each $S\subseteq X$, we denote by ${\bf e}_S$  the $n$-tuple in $\{a,b\}^n$ whose $i$-th component
is $b$ if $i\in S$, and $a$ otherwise.

\begin{theorem}[{\cite{CouMar09}}]\label{thm:ComMaxMin}
Assume  $I=[a,b]\subseteq \R$. A function $f\colon I^n\to \R$ is comonotonic maxitive (resp.\ comonotonic minitive)
if and only if there exists a nondecreasing function $g\colon  I^n\to \R$  such that
$$%\begin{equation}\label{eq:ComMaxP1}
f(\bfx)=\bigvee_{S\subseteq X} g\Big({\bf e}_S\wedge\bigwedge_{i\in S}x_i\Big) \qquad \text{(resp. }
f(\bfx)=\bigwedge_{S\subseteq X} g\Big({\bf e}_{X\setminus S}\vee\bigvee_{i\in S}x_i\Big)\text{)}.
$$%\end{equation}
In this case, we can choose $g=f$.
\end{theorem}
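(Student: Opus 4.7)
The plan is to prove the two directions of the comonotonic-maxitive equivalence separately and then note that the comonotonic-minitive case follows by an order-theoretic dual argument. The whole argument rests on the following comonotonic ``layer'' decomposition: for any permutation $\sigma$ and any $\bfx\in I^n_\sigma$, set $\mathbf{y}_i={\bf e}_{S_\sigma^{\uparrow}(i)}\wedge x_{\sigma(i)}$ for $i=1,\ldots,n$. Since ${\bf e}_{S_\sigma^{\uparrow}(i)}$ equals $b$ on $S_\sigma^{\uparrow}(i)$ and $a$ elsewhere, and since $a\leqslant x_{\sigma(i)}\leqslant b$, a coordinatewise check shows that every $\mathbf{y}_i$ lies in $I^n_\sigma$ (hence is pairwise comonotonic with $\bfx$) and that $\bfx=\bigvee_{i=1}^n\mathbf{y}_i$.

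For the necessity direction I would first recall that comonotonic maxitivity implies nondecreasing monotonicity (noted in the paragraph preceding the statement), so $g:=f$ is an admissible candidate. Since the join preserves the region $I^n_\sigma$, comonotonic maxitivity extends inductively to finite joins, giving
$$
f(\bfx) ~=~ \bigvee_{i=1}^n f(\mathbf{y}_i) ~=~ \bigvee_{i=1}^n f\Big({\bf e}_{S_\sigma^{\uparrow}(i)}\wedge\bigwedge_{j\in S_\sigma^{\uparrow}(i)}x_j\Big),
$$
where in the second equality I use $\bigwedge_{j\in S_\sigma^{\uparrow}(i)}x_j=x_{\sigma(i)}$. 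This already yields the ``$\leqslant$'' half of the claimed formula; the matching upper bound follows from monotonicity, since for every $S\subseteq X$ the tuple ${\bf e}_S\wedge\bigwedge_{i\in S}x_i$ is coordinatewise dominated by $\bfx$.

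For the sufficiency direction I would fix comonotonic $\bfx,\bfx'\in I^n_\sigma$ and compute $f(\bfx\vee\bfx')$. The first observation is that, by comonotonicity, both inner minima are attained at the same index $\sigma(k_S)$ where $k_S=\min\{k:\sigma(k)\in S\}$, so
$$
\bigwedge_{i\in S}(x_i\vee x'_i) ~=~ \Big(\bigwedge_{i\in S}x_i\Big)\vee\Big(\bigwedge_{i\in S}x'_i\Big).
$$
The second, subtler step is the identity
$$
g\big({\bf e}_S\wedge(c\vee c')\big) ~=~ g({\bf e}_S\wedge c)\vee g({\bf e}_S\wedge c'),\qquad c,c'\in I,
$$
which holds because $c\mapsto g({\bf e}_S\wedge c)$ is a nondecreasing one-variable function of the scalar $c$ and $c\vee c'$ is necessarily one of $c,c'$. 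Combining the two displays and exchanging the suprema over $S$ and over $\{\bfx,\bfx'\}$ yields $f(\bfx\vee\bfx')=f(\bfx)\vee f(\bfx')$.

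The comonotonic-minitive case is entirely analogous: one uses the dual decomposition of $\bfx$ as a meet of ``upper layer'' tuples ${\bf e}_{X\setminus S_\sigma^{\uparrow}(i-1)}\vee x_{\sigma(i)}$ and runs the same two arguments with meets and joins interchanged. I expect the only real obstacle to be the distributivity-type identity in the sufficiency part, where the restriction to scalar $c,c'$ is precisely what lets the supremum pass inside the (possibly non-maxitive) function $g$; the remainder is routine bookkeeping on the comonotonic layer decomposition familiar from the Choquet setting.
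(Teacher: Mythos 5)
The paper does not actually prove Theorem~\ref{thm:ComMaxMin}: it is imported from \cite{CouMar09} and stated only for self-containment, so there is no internal proof to compare against. Judged on its own, your argument for the comonotonic-maxitive case is correct and is the natural one. The join decomposition $\bfx=\bigvee_{i=1}^n\big({\bf e}_{S_\sigma^{\uparrow}(i)}\wedge x_{\sigma(i)}\big)$ inside $I^n_\sigma$, iterated comonotonic maxitivity, and the nondecreasing monotonicity of $f$ (which the paper records as a consequence of comonotonic maxitivity) for the reverse inequality give necessity with $g=f$; in the sufficiency direction the two identities you isolate --- $\bigwedge_{i\in S}(x_i\vee x_i')=\big(\bigwedge_{i\in S}x_i\big)\vee\big(\bigwedge_{i\in S}x_i'\big)$ for comonotonic tuples, justified by the common minimizing index $\sigma(k_S)$, and the passage of a scalar join through the nondecreasing map $c\mapsto g({\bf e}_S\wedge c)$ --- are exactly what is needed and are correctly argued.

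The one flaw is in your one-line sketch of the minitive case: the layer tuples you write, ${\bf e}_{X\setminus S_\sigma^{\uparrow}(i-1)}\vee x_{\sigma(i)}$, do not reconstruct $\bfx$. Since $X\setminus S_\sigma^{\uparrow}(i-1)=\{\sigma(1),\ldots,\sigma(i-2)\}$, the $i$th tuple equals $x_{\sigma(i)}$ at position $\sigma(k)$ for every $i\leqslant k+1$, so the meet over $i$ collapses to the constant value $x_{\sigma(1)}$ at every position. The correct dual decomposition, matching the sets $S=S_\sigma^{\downarrow}(i)$ that occur in the stated minitive formula, is
$$
\bfx~=~\bigwedge_{i=1}^n\Big({\bf e}_{X\setminus S_\sigma^{\downarrow}(i)}\vee x_{\sigma(i)}\Big)
~=~\bigwedge_{i=1}^n\Big({\bf e}_{S_\sigma^{\uparrow}(i+1)}\vee x_{\sigma(i)}\Big),\qquad\bfx\in I^n_\sigma,
$$
using $\bigvee_{j\in S_\sigma^{\downarrow}(i)}x_j=x_{\sigma(i)}$. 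With this correction the dual argument goes through verbatim, so the error is an index slip rather than a conceptual gap.
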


These descriptions are further refined in the following corollary.

\begin{corollary}\label{cor:ComMaxMin}
Assume  $I=[a,b]\subseteq \R$. For any function $f\colon I^n\to\R$, the following assertions are equivalent.
\begin{enumerate}
\item[$(i)$] $f$ is comonotonic maxitive (resp.\ comonotonic minitive).
\item[$(ii)$]
there are unary nondecreasing functions
$\varphi_S\colon I\to \R$ ($S\subseteq X$) such that
$$
f(\bfx)=\bigvee_{S\subseteq X}\varphi_S\Big(\bigwedge_{i\in S}x_i\Big)\qquad \text{(resp. }
 f(\bfx)=\bigwedge_{S\subseteq X}\varphi_S\Big(\bigvee_{i\in S}x_i\Big)\text{)}.
 $$
In this case, we can choose $\varphi_S(x)=f({\bf e}_S\wedge x)$ (resp.\ $\varphi_S(x)=f({\bf e}_{X\setminus S}\vee x)$)
for every $S\subseteq X$.

\item[$(iii)$]
for every $\sigma\in\mathfrak{S}_X$, there are nondecreasing functions $f^{\sigma}_i\colon I\to \R$ ($i=1,\ldots,n$)
such that, for every $\bfx\in I^n_{\sigma}$,
  $$
f(\bfx)=\bigvee_{i\in X}f^{\sigma}_i(x_{\sigma(i)})\qquad \text{(resp. }
f(\bfx)=\bigwedge_{i\in X}f^{\sigma}_i(x_{\sigma(i)})\text{)}.
  $$
In this case, we can choose $f^{\sigma}_i(x)=f({\bf e}_{S_{\sigma}^{\uparrow}(i)}\wedge x)$ (resp.\ $f^{\sigma}_i(x)=f({\bf e}_{S_{\sigma}^{\downarrow}(i-1)}\vee x)$).
\end{enumerate}
\end{corollary}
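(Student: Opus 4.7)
The plan is to prove the comonotonic maxitive case in detail; the minitive case follows by a dual argument (replacing $\vee\leftrightarrow\wedge$, $S_\sigma^\uparrow(i)\leftrightarrow S_\sigma^\downarrow(i-1)$, and ${\bf e}_S\leftrightarrow{\bf e}_{X\setminus S}$). Throughout, I use that comonotonic maxitivity implies nondecreasing monotonicity of $f$ \cite{CouMar2}. I will establish $(i)\Rightarrow(ii)$ directly from Theorem~\ref{thm:ComMaxMin}, the cycle $(i)\Rightarrow(iii)\Rightarrow(i)$, and observe that $(ii)\Rightarrow(i)$ proceeds analogously.

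For $(i)\Rightarrow(ii)$, I apply Theorem~\ref{thm:ComMaxMin} with $g=f$. For any $\bfx\in I^n$ and nonempty $S\subseteq X$, the scalar $m_S:=\bigwedge_{i\in S}x_i$ lies in $[a,b]$, so the $n$-tuple ${\bf e}_S\wedge m_S$ has value $m_S$ in positions of $S$ and $a$ elsewhere. Setting $\varphi_S(x):=f({\bf e}_S\wedge x)$ (nondecreasing since $f$ is) yields $f({\bf e}_S\wedge m_S)=\varphi_S(m_S)$, and substituting gives the representation in $(ii)$ with the stated explicit formula.

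For $(i)\Rightarrow(iii)$, fix $\sigma\in\mathfrak{S}_X$ and $\bfx\in I^n_\sigma$, and verify componentwise the key decomposition
$$
\bfx ~=~ \bigvee_{i=1}^n\bigl({\bf e}_{S_\sigma^\uparrow(i)}\wedge x_{\sigma(i)}\bigr).
$$
Indeed, the $\sigma(k)$-th component of the $i$-th summand equals $x_{\sigma(i)}$ if $k\geq i$ and $a$ if $k<i$, so the join over $i$ recovers $x_{\sigma(k)}$. In the $\sigma$-ordering, the $i$-th summand reads $(a,\ldots,a,x_{\sigma(i)},\ldots,x_{\sigma(i)})$, which is nondecreasing; hence each summand lies in $I^n_\sigma$ and the summands are pairwise comonotonic. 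Comonotonic maxitivity then gives $f(\bfx)=\bigvee_i f^\sigma_i(x_{\sigma(i)})$ with $f^\sigma_i(x):=f({\bf e}_{S_\sigma^\uparrow(i)}\wedge x)$, which is nondecreasing. For $(iii)\Rightarrow(i)$, I observe that $I^n_\sigma$ is closed under componentwise $\vee$ (the pointwise join of two nondecreasing sequences is nondecreasing), and since each $f^\sigma_i$ is nondecreasing it satisfies $f^\sigma_i(s\vee t)=f^\sigma_i(s)\vee f^\sigma_i(t)$; distributing yields $f(\bfx\vee\bfx')=f(\bfx)\vee f(\bfx')$ for comonotonic $\bfx,\bfx'$.

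The argument is largely bookkeeping; the one step requiring care is the componentwise verification of the key decomposition and the confirmation that each summand lies in $I^n_\sigma$, which is what licenses the single application of comonotonic maxitivity. Keeping track of the index conventions $S_\sigma^\uparrow(i)$ versus $S_\sigma^\downarrow(i-1)$ when passing to the minitive dual is also a minor source of friction, but no conceptual obstacle is expected, since the corollary is essentially a refinement of Theorem~\ref{thm:ComMaxMin} via standard order-theoretic manipulations.
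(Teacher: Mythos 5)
The paper states this corollary without proof (it is offered as a refinement of Theorem~\ref{thm:ComMaxMin}), so there is no official argument to compare against; your route --- $(i)\Rightarrow(ii)$ by specializing Theorem~\ref{thm:ComMaxMin} to $g=f$, and $(i)\Leftrightarrow(iii)$ via the join decomposition $\bfx=\bigvee_{i}(\mathbf{e}_{S_\sigma^\uparrow(i)}\wedge x_{\sigma(i)})$ on $I^n_\sigma$ --- is surely the intended one, and your treatment of the maxitive case is correct, including the check that all joinands lie in $I^n_\sigma$ so that comonotonic maxitivity can be applied iteratively.

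The gap is in the minitive half, which you dispose of with the substitution rule ``$\vee\leftrightarrow\wedge$, $S_\sigma^\uparrow(i)\leftrightarrow S_\sigma^\downarrow(i-1)$, $\mathbf{e}_S\leftrightarrow\mathbf{e}_{X\setminus S}$''. First, the rule is self-cancelling as written: since $S_\sigma^\downarrow(i-1)=X\setminus S_\sigma^\uparrow(i)$, applying both replacements to $\mathbf{e}_{S_\sigma^\uparrow(i)}$ returns $\mathbf{e}_{S_\sigma^\uparrow(i)}$. More importantly, the decomposition the rule is meant to produce, $\bfx=\bigwedge_i(\mathbf{e}_{S_\sigma^\downarrow(i-1)}\vee x_{\sigma(i)})$, is false: the $\sigma(k)$th component of the $i$th term equals $b$ for $k<i$ and $x_{\sigma(i)}$ for $k\geqslant i$, so the meet over $i$ at position $\sigma(k)$ is $\bigwedge_{i\leqslant k}x_{\sigma(i)}=x_{\sigma(1)}$; the right-hand side collapses to the constant tuple $x_{\sigma(1)}\bfone$. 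The correct dual decomposition is $\bfx=\bigwedge_i(\mathbf{e}_{S_\sigma^\uparrow(i+1)}\vee x_{\sigma(i)})$ (equivalently, with $\mathbf{e}_{X\setminus S_\sigma^\downarrow(i)}$), which yields the witness $f^\sigma_i(x)=f(\mathbf{e}_{S_\sigma^\uparrow(i+1)}\vee x)$. Note that this disagrees with the choice printed in assertion $(iii)$ of the corollary, which appears to carry the same index slip: for the comonotonic minitive function $f(\bfx)=x_1$ on $[0,1]^2$ and the region $x_2\leqslant x_1$ (so $\sigma(1)=2$, $\sigma(2)=1$), the printed formulas give $f^\sigma_1(x)=f(x,x)=x$ and $f^\sigma_2(x)=f(x,1)=x$, whence $\bigwedge_i f^\sigma_i(x_{\sigma(i)})=x_1\wedge x_2=x_2\neq x_1$. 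So the minitive case genuinely needs to be written out rather than waved through by duality: doing so both completes your proof and corrects the explicit choice of $f^\sigma_i$ in $(iii)$.
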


\begin{remark}
\begin{itemize}
\item[(i)]Note that the expressions provided in Theorem \ref{thm:ComMaxMin} and Corollary \ref{cor:ComMaxMin}
greatly differ from the additive form given in Theorem \ref{thm:wqer87we}.

\item[(ii)] An alternative description of comonotonic maxitive (resp.\ comonotonic minitive) functions
was obtained in Grabisch et al.~\cite[Ch.~2]{GraMarMesPap09}.
\end{itemize}
\end{remark}

%---------------------------------------------------------------------------------------------- Section 4
\section{Classes of comonotonically modular integrals}\label{COMDinteg}

In this section we present axiomatizations of classes of functions that naturally generalize Choquet integrals
(e.g., signed Choquet integrals and symmetric signed Choquet integrals) by means of comonotonic modularity
and variants of homogeneity. From the analysis of the more stringent properties of comonotonic minitivity
and comonotonic maxitivity, we also present axiomatizations of classes of functions generalizing
Sugeno integrals.

\subsection{Comonotonically modular integrals generalizing Choquet integrals}

The following theorem provides an axiomatization of the class of $n$-variable signed Choquet integrals.

\begin{theorem}\label{thm:CModSCI}
Assume $[0,1]\subseteq I\subseteq \R_+$ or $[-1,1]\subseteq I$. Then a function $f\colon I^n\to\R$ is a signed Choquet integral if and only if the following conditions hold:
\begin{enumerate}
\item[$(i)$] $f$ is comonotonically modular.

\item[$(ii)$] $f(\bfzero)=0$ and $f(x\bfone_S)=\mathrm{sign}(x){\,}x{\,}f(\mathrm{sign}(x){\,}\bfone_S)$ for all $x\in I$ and $S\subseteq X$.

\item[$(iii)$] If $[-1,1]\subseteq I$, then $f(\bfone_{X\setminus S})=f(\bfone)+f(-\bfone_S)$ for all $S\subseteq X$.
\end{enumerate}
\end{theorem}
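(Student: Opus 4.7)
\textbf{Proof plan for Theorem \ref{thm:CModSCI}.} I would handle the two implications separately, with sufficiency being the easy one. For the forward direction, if $f=C_v|_{I^n}$ for some signed capacity $v$, then (i) holds because comonotonic additivity (which signed Choquet integrals enjoy, see Theorem \ref{thm:saddsf687}) is a stronger property than comonotonic modularity; (ii) is exactly the reformulation recorded in Remark \ref{rem:saddsf687} combined with $C_v(\bfzero)=0$; and (iii) is immediate from the duality formula (\ref{eq:dual}): indeed $C_v(-\bfone_S)=-C_{v^d}(\bfone_S)=v(X\setminus S)-v(X)$, so $C_v(\bfone)+C_v(-\bfone_S)=v(X\setminus S)=C_v(\bfone_{X\setminus S})$.

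For the converse, set $v(S):=f(\bfone_S)$; this is a signed capacity because $v(\varnothing)=f(\bfzero)=0$ by (ii). The aim is to show $f=C_v$ on $I^n$, and the tool is the additive representation provided by Theorem \ref{thm:wqer87we}(iii), which, thanks to (i), yields, for $\bfx\in I^n_\sigma$,
\begin{equation*}
f(\bfx)=\sum_{i=1}^{p}\bigl(h(x_{\sigma(i)}\bfone_{S_{\sigma}^{\downarrow}(i)})-h(x_{\sigma(i)}\bfone_{S_{\sigma}^{\downarrow}(i-1)})\bigr)+\sum_{i=p+1}^{n}\bigl(g(x_{\sigma(i)}\bfone_{S_{\sigma}^{\uparrow}(i)})-g(x_{\sigma(i)}\bfone_{S_{\sigma}^{\uparrow}(i+1)})\bigr),
\end{equation*}
with $g=f|_{I_+^n}$ and $h=f|_{I_-^n}$. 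In the case $[0,1]\subseteq I\subseteq\R_+$ there is no first sum, and (ii) gives $g(x\bfone_S)=x\,f(\bfone_S)=x\,v(S)$ for $x\geqslant 0$; substituting yields exactly $C_v(\bfx)$ as in (\ref{eq:SignedChoquet}).

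The real work is the case $[-1,1]\subseteq I$, where the first sum is nontrivial. Using (ii) for $x<0$, each term $h(x\bfone_S)$ rewrites as $-x\,f(-\bfone_S)$, so the first sum becomes $\sum_{i=1}^{p}(-x_{\sigma(i)})\bigl(f(-\bfone_{S_{\sigma}^{\downarrow}(i)})-f(-\bfone_{S_{\sigma}^{\downarrow}(i-1)})\bigr)$. Here hypothesis (iii) is essential: since $X\setminus S_{\sigma}^{\downarrow}(i)=S_{\sigma}^{\uparrow}(i+1)$ and $X\setminus S_{\sigma}^{\downarrow}(i-1)=S_{\sigma}^{\uparrow}(i)$, (iii) rewrites $f(-\bfone_{S_{\sigma}^{\downarrow}(i)})$ as $v(S_{\sigma}^{\uparrow}(i+1))-v(X)$ (and similarly for the $(i-1)$ term). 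After the difference, the $v(X)$ cancels and the sum collapses to $\sum_{i=1}^{p}x_{\sigma(i)}\bigl(v(S_{\sigma}^{\uparrow}(i))-v(S_{\sigma}^{\uparrow}(i+1))\bigr)$, which combines with the second sum to produce precisely (\ref{eq:SignedChoquet}).

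The main obstacle is thus the passage from the \emph{two-sided} decomposition of Theorem \ref{thm:wqer87we}(iii) to the \emph{one-sided} Choquet expression driven only by the values $v(S)=f(\bfone_S)$; condition (iii) is exactly the compatibility between the positive and negative rays needed to collapse the decomposition, and it is effectively a finite-dimensional reflection of the duality identity (\ref{eq:dual}). Once this coupling is established, the rest of the argument is bookkeeping with the indices $S_{\sigma}^{\uparrow}$ and $S_{\sigma}^{\downarrow}$, and the conclusion that $f$ coincides with $C_v$ on $I^n$ follows on every simplex $I^n_\sigma$, hence on $I^n$.
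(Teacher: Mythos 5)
Your proposal is correct and follows essentially the same route as the paper's proof: necessity of (ii) and (iii) via Remark~\ref{rem:saddsf687} and the duality formula (\ref{eq:dual}), and sufficiency by substituting conditions (ii) and (iii) into the decomposition of Theorem~\ref{thm:wqer87we}(iii) so that the two-sided sum collapses, via the identities $X\setminus S_{\sigma}^{\downarrow}(i)=S_{\sigma}^{\uparrow}(i+1)$, to the single sum (\ref{eq:SignedChoquet2}) with $v(S)=f(\bfone_S)$. The only cosmetic difference is in the necessity of (i), which you get by invoking comonotonic additivity of $C_v$ (immediate from piecewise linearity on each $\R^n_\sigma$), whereas the paper verifies the additive decomposition of Theorem~\ref{thm:wqer87we}(iii) directly; both arguments are sound.
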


\begin{proof}
(Necessity) Assume that $f$ is a signed Choquet integral, $f=C_v$. Then condition (ii) is satisfied in view of Theorem~\ref{thm:saddsf687} and Remark~\ref{rem:saddsf687}. If $[-1,1]\subseteq I$, then by (\ref{eq:dual}) we have
$$
C_v(-\bfone_S) ~=~ -C_{v^d}(\bfone_S) ~=~ C_v(\bfone_{X\setminus S})-C_v(\bfone),
$$
which shows that condition (iii) is satisfied. Let us now show that condition (i) is also satisfied.
For every $\sigma\in\mathfrak{S}_X$ and every $\bfx\in\R^n_{\sigma}$,
setting $p\in\{0,\ldots,n\}$ such that $x_{\sigma(p)}<0\leqslant x_{\sigma(p+1)}$, by (\ref{eq:SignedChoquet2})
and conditions (iii) and (ii), we have
\begin{eqnarray*}
C_v(\bfx) &=& \sum_{i=1}^nx_{\sigma(i)}\,\big(C_v(\mathbf{1}_{S_{\sigma}^{\uparrow}(i)})-C_v(\mathbf{1}_{S_{\sigma}^{\uparrow}(i+1)})\big)\\
&=& \sum_{i=1}^px_{\sigma(i)}\,\big(C_v(-\mathbf{1}_{S_{\sigma}^{\downarrow}(i-1)})-C_v(-\mathbf{1}_{S_{\sigma}^{\downarrow}(i)})\big)%
+\sum_{i=p+1}^nx_{\sigma(i)}\,\big(C_v(\mathbf{1}_{S_{\sigma}^{\uparrow}(i)})-C_v(\mathbf{1}_{S_{\sigma}^{\uparrow}(i+1)})\big)\\
&=& \sum_{i=1}^p\big(C_v(x_{\sigma(i)}\,\mathbf{1}_{S_{\sigma}^{\downarrow}(i)})-C_v(x_{\sigma(i)}\,\mathbf{1}_{S_{\sigma}^{\downarrow}(i-1)})\big)%
+\sum_{i=p+1}^n\big(C_v(x_{\sigma(i)}\,\mathbf{1}_{S_{\sigma}^{\uparrow}(i)})-C_v(x_{\sigma(i)}\,\mathbf{1}_{S_{\sigma}^{\uparrow}(i+1)})\big)
\end{eqnarray*}
which shows that condition (iii) of Theorem~\ref{thm:wqer87we} is satisfied. Hence $C_v$ is comonotonically modular.

(Sufficiency) Assume that $f$ satisfies conditions (i)--(iii). By condition (iii) of Theorem~\ref{thm:wqer87we} and conditions (ii) and (iii), for every $\sigma\in\mathfrak{S}_X$ and every $\bfx\in\R^n_{\sigma}$ we have
\begin{eqnarray*}
f(\bfx) &=& \sum_{i=1}^p\big
(f(x_{\sigma(i)}\mathbf{1}_{S_{\sigma}^{\downarrow}(i)})-f(x_{\sigma(i)}\mathbf{1}_{S_{\sigma}^{\downarrow}(i-1)})\big)%
+\sum_{i=p+1}^n\big
(f(x_{\sigma(i)}\mathbf{1}_{S_{\sigma}^{\uparrow}(i)})-f(x_{\sigma(i)}\mathbf{1}_{S_{\sigma}^{\uparrow}(i+1)})\big)\\
&=& \sum_{i=1}^px_{\sigma(i)}\,\big(f(-\mathbf{1}_{S_{\sigma}^{\downarrow}(i-1)})-f(-\mathbf{1}_{S_{\sigma}^{\downarrow}(i)})\big)%
+ \sum_{i=p+1}^nx_{\sigma(i)}\,\big
(f(\mathbf{1}_{S_{\sigma}^{\uparrow}(i)})-f(\mathbf{1}_{S_{\sigma}^{\uparrow}(i+1)})\big)\\
&=& \sum_{i=1}^nx_{\sigma(i)}\,\big
(f(\mathbf{1}_{S_{\sigma}^{\uparrow}(i)})-f(\mathbf{1}_{S_{\sigma}^{\uparrow}(i+1)})\big)
\end{eqnarray*}
which, combined with (\ref{eq:SignedChoquet2}), shows that $f$ is a signed Choquet integral.
\end{proof}

\begin{remark}
Condition (iii) of Theorem~\ref{thm:CModSCI} is necessary. Indeed, the function $f(\bfx)=C_v(\bfx^+)$ satisfies conditions (i) and (ii) but fails to satisfy condition (iii).
\end{remark}

\begin{theorem}\label{thm:CModSSCI}
Assume $I$ is centered at $0$ with $[-1,1]\subseteq I$. Then a function $f\colon I^n\to\R$ is a symmetric signed Choquet integral if and only if the following conditions hold:
\begin{enumerate}
\item[$(i)$] $f$ is comonotonically modular.

\item[$(ii)$] $f(x\bfone_S)=x{\,}f(\bfone_S)$ for all $x\in I$ and $S\subseteq X$.
\end{enumerate}
\end{theorem}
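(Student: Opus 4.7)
The plan is to mimic closely the structure of the proof of Theorem~\ref{thm:CModSCI}, replacing the formula (\ref{eq:SignedChoquet2}) for the signed Choquet integral by the explicit representation (\ref{eq:sdfsfd65dsf}) for the symmetric signed Choquet integral. Notice that the main difference with the signed case is that condition (ii) here is the simpler homogeneity $f(x\bfone_S)=x\,f(\bfone_S)$ (in particular $f(-\bfone_S)=-f(\bfone_S)$), so no analogue of condition (iii) of Theorem~\ref{thm:CModSCI} is needed; this is precisely what makes the bookkeeping uniform across the two sums appearing in (\ref{eq:sdfsfd65dsf}).

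For necessity, suppose $f=\check{C}_v$. Condition (ii) is exactly the content of Remark~\ref{rem:saddsf687a}. To verify (i), I start from (\ref{eq:sdfsfd65dsf}) and rewrite each summand $x_{\sigma(i)}\,C_v(\bfone_T)$ as $\check{C}_v(x_{\sigma(i)}\bfone_T)$, using that $\check{C}_v$ and $C_v$ agree on indicator $n$-tuples together with the homogeneity of $\check{C}_v$ (Remark~\ref{rem:saddsf687a}). The resulting expression matches exactly the canonical decomposition of condition (iii) of Theorem~\ref{thm:wqer87we} (with $\check{C}_v(\bfzero)=0$ and $g$, $h$ taken to be the restrictions of $\check{C}_v$ to $I_+^n$ and $I_-^n$, respectively), so $\check{C}_v$ is comonotonically modular.

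For sufficiency, assume (i) and (ii). From (ii) with $x=0$ we obtain $f(\bfzero)=0$, and setting $v(S)=f(\bfone_S)$ yields a signed capacity on $X$. By Theorem~\ref{thm:wqer87we}(iii) applied to $f$, each $\bfx\in I_\sigma^n$ admits the decomposition
$$
f(\bfx)=\sum_{i=1}^p\bigl(f(x_{\sigma(i)}\bfone_{S_\sigma^{\downarrow}(i)})-f(x_{\sigma(i)}\bfone_{S_\sigma^{\downarrow}(i-1)})\bigr)+\sum_{i=p+1}^n\bigl(f(x_{\sigma(i)}\bfone_{S_\sigma^{\uparrow}(i)})-f(x_{\sigma(i)}\bfone_{S_\sigma^{\uparrow}(i+1)})\bigr),
$$
and condition (ii) allows me to factor $x_{\sigma(i)}$ out of every term. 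Since $C_v(\bfone_T)=v(T)=f(\bfone_T)$, the resulting expression is exactly formula~(\ref{eq:sdfsfd65dsf}) for $\check{C}_v(\bfx)$; hence $f=\check{C}_v$ on $I^n$.

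I do not expect any significant obstacle: this is the symmetric analogue of Theorem~\ref{thm:CModSCI} and is essentially a verification. The only step requiring a little care is the simultaneous use of condition (ii) in the ``negative'' sum (where $x_{\sigma(i)}<0$) and the ``positive'' sum, but since (ii) is sign-free — unlike the corresponding condition in Theorem~\ref{thm:CModSCI}, which needed the additional relation (iii) to handle the reflection $\bfone_S\mapsto-\bfone_S$ — the factoring works uniformly in both sums.
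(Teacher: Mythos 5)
Your proposal is correct and follows essentially the same route as the paper's own proof: in both directions it converts between the explicit formula (\ref{eq:sdfsfd65dsf}) and the decomposition of Theorem~\ref{thm:wqer87we}(iii) via the homogeneity condition (ii), exactly as the paper does. The small additions (observing $f(\bfzero)=0$ from (ii) at $x=0$ and explicitly naming $v(S)=f(\bfone_S)$) are harmless and consistent with the paper's argument.
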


\begin{proof}
(Necessity) Assume that $f$ is a symmetric signed Choquet integral, $f=\check{C}_v$. Then condition (ii) is satisfied in view of Theorem~\ref{thm:saddsf687a} and Remark~\ref{rem:saddsf687a}. Let us now show that condition (i) is also satisfied. For every $\sigma\in\mathfrak{S}_X$ and every $\bfx\in\R^n_{\sigma}$, setting $p\in\{0,\ldots,n\}$ such that $x_{\sigma(p)}<0\leqslant x_{\sigma(p+1)}$, by (\ref{eq:sdfsfd65dsf}) and condition (ii), we have
\begin{eqnarray*}
C_v(\bfx) &=& \sum_{i=1}^px_{\sigma(i)}\,\big(C_v(\mathbf{1}_{S_{\sigma}^{\downarrow}(i)})-C_v(\mathbf{1}_{S_{\sigma}^{\downarrow}(i-1)})\big)%
+\sum_{i=p+1}^nx_{\sigma(i)}\,\big(C_v(\mathbf{1}_{S_{\sigma}^{\uparrow}(i)})-C_v(\mathbf{1}_{S_{\sigma}^{\uparrow}(i+1)})\big)\\
&=& \sum_{i=1}^p\big(C_v(x_{\sigma(i)}\,\mathbf{1}_{S_{\sigma}^{\downarrow}(i)})-C_v(x_{\sigma(i)}\,\mathbf{1}_{S_{\sigma}^{\downarrow}(i-1)})\big)%
+\sum_{i=p+1}^n\big(C_v(x_{\sigma(i)}\,\mathbf{1}_{S_{\sigma}^{\uparrow}(i)})-C_v(x_{\sigma(i)}\,\mathbf{1}_{S_{\sigma}^{\uparrow}(i+1)})\big)
\end{eqnarray*}
which shows that condition (iii) of Theorem~\ref{thm:wqer87we} is satisfied. Hence $C_v$ is comonotonically modular.

(Sufficiency) Assume that $f$ satisfies conditions (i) and (ii). By condition (iii) of Theorem~\ref{thm:wqer87we} and condition (ii), for every $\sigma\in\mathfrak{S}_X$ and every $\bfx\in\R^n_{\sigma}$ we have
\begin{eqnarray*}
f(\bfx) &=& \sum_{i=1}^p\big
(f(x_{\sigma(i)}\mathbf{1}_{S_{\sigma}^{\downarrow}(i)})-f(x_{\sigma(i)}\mathbf{1}_{S_{\sigma}^{\downarrow}(i-1)})\big)%
+ \sum_{i=p+1}^n\big
(f(x_{\sigma(i)}\mathbf{1}_{S_{\sigma}^{\uparrow}(i)})-f(x_{\sigma(i)}\mathbf{1}_{S_{\sigma}^{\uparrow}(i+1)})\big)\\
&=& \sum_{i=1}^px_{\sigma(i)}\,\big(f(\mathbf{1}_{S_{\sigma}^{\downarrow}(i)})-f(\mathbf{1}_{S_{\sigma}^{\downarrow}(i-1)})\big)%
+\sum_{i=p+1}^nx_{\sigma(i)}\,\big(f(\mathbf{1}_{S_{\sigma}^{\uparrow}(i)})-f(\mathbf{1}_{S_{\sigma}^{\uparrow}(i+1)})\big)
\end{eqnarray*}
which, combined with (\ref{eq:sdfsfd65dsf}), shows that $f$ is a symmetric signed Choquet integral.
\end{proof}

The authors \cite{CouMar11b} showed that comonotonically modular functions also include the class of
signed quasi-Choquet integrals on intervals of the forms $I_+$ and $I_-$ and
the class of symmetric signed quasi-Choquet integrals on intervals $I$ centered at the origin.

\begin{definition}
Assume $I\ni 0$ and let $v$ be a signed capacity on $X$. A \emph{signed quasi-Choquet integral} with respect to $v$ is a function $f\colon I^n\to\R$ defined as $f(\bfx)=C_v(\varphi(x_1),\ldots,\varphi(x_n))$, where $\varphi\colon I\to\R$ is a nondecreasing function satisfying $\varphi(0)=0$.
\end{definition}

We now recall axiomatizations of the class of $n$-variable signed quasi-Choquet integrals on $I_+$ and $I_-$ by means of comonotonic modularity and variants of homogeneity.

\begin{theorem}[{\cite{CouMar11b}}]\label{thm:sdfa5dsa76}
Assume $[0,1]\subseteq I\subseteq\R_+$ (resp.\ $[-1,0]\subseteq I\subseteq\R_-$) and let $f\colon I^n\to\R$ be a nonconstant function such that $f(\bfzero)=0$. Then the following assertions are equivalent.
\begin{enumerate}
\item[$(i)$] $f$ is a signed quasi-Choquet integral and there exists $S\subseteq X$ such that $f(\mathbf{1}_S)\neq 0$ (resp.\ $f(-\mathbf{1}_S)\neq 0$).

\item[$(ii)$] $f|_{I_+^n}$ is comonotonically modular (or invariant under horizontal min-differences), $f|_{I_-^n}$ is
comonotonically modular (or invariant under horizontal max-differences), and there exists a nondecreasing function $\varphi\colon I\to\R$ satisfying $\varphi(0)=0$ such that $f(x\bfone_S)=\mathrm{sign}(x){\,}\varphi(x){\,}f(\mathrm{sign}(x){\,}\bfone_S)$ for every $x\in I$ and every $S\subseteq X$.
%
%\item[$(iii)$] There is a nondecreasing function $\varphi_f\colon I\to\R$ satisfying $\varphi_f(0)=0$ and $\varphi_f(1)=1$ (resp.\ $\varphi_f(-1)=-1$) such that $f=C_{f|_{\{0,1\}^n}}\circ\varphi_f$ (resp.\ $f=C_{f|_{\{-1,0\}^n}}\circ\varphi_f$). ;;; Interpret this latter formula ;;;
\end{enumerate}
\end{theorem}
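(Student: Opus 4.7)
The plan is to exploit the structural description of comonotonically modular functions in Theorem~\ref{thm:wqer87we}(iii), substitute the homogeneity condition of (ii) to collapse the resulting additive decomposition into a signed Choquet formula, and then read off the required signed capacity. I carry out the argument for $[0,1]\subseteq I\subseteq\R_+$; the case $[-1,0]\subseteq I\subseteq\R_-$ will follow by a mirror argument.

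For the direction (i)$\Rightarrow$(ii), suppose $f(\bfx)=C_v(\psi(x_1),\ldots,\psi(x_n))$ with $\psi$ nondecreasing and $\psi(0)=0$. Nondecreasingness of $\psi$ implies that comonotonic tuples $\bfx,\bfx'\in I_+^n$ have comonotonic images in the same region $\R^n_\sigma$ and that $\psi$ commutes with componentwise $\wedge$ and $\vee$; combined with the comonotonic modularity of $C_v$ (already verified in the proof of Theorem~\ref{thm:CModSCI}), this yields comonotonic modularity of $f|_{I_+^n}$. The homogeneity follows from positive homogeneity of $C_v$: after setting $\varphi=\psi/\psi(1)$ (equivalently, absorbing the positive factor $\psi(1)$ into the signed capacity $v$), one obtains $f(x\bfone_S)=\varphi(x){\,}f(\bfone_S)=\mathrm{sign}(x){\,}\varphi(x){\,}f(\mathrm{sign}(x){\,}\bfone_S)$ for every $x\in I_+$ and $S\subseteq X$.

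For (ii)$\Rightarrow$(i), note that $I\subseteq\R_+$ makes $f=f|_{I_+^n}$ itself comonotonically modular on $I^n$, so Theorem~\ref{thm:wqer87we}(iii) applies with $p=0$; together with $f(\bfzero)=0$ it gives
\[
f(\bfx)=\sum_{i=1}^n\Bigl(f\bigl(x_{\sigma(i)}\bfone_{S_\sigma^\uparrow(i)}\bigr)-f\bigl(x_{\sigma(i)}\bfone_{S_\sigma^\uparrow(i+1)}\bigr)\Bigr),\qquad\bfx\in I^n_\sigma.
\]
Substituting $f(x\bfone_S)=\varphi(x){\,}f(\bfone_S)$ from (ii) and defining $v(S):=f(\bfone_S)$ (a signed capacity since $v(\varnothing)=f(\bfzero)=0$) turns this into
\[
f(\bfx)=\sum_{i=1}^n\varphi(x_{\sigma(i)})\bigl(v(S_\sigma^\uparrow(i))-v(S_\sigma^\uparrow(i+1))\bigr),
\]
and since $\varphi$ nondecreasing places $(\varphi(x_i))_i$ in $\R^n_\sigma$, the right-hand side equals $C_v(\varphi(x_1),\ldots,\varphi(x_n))$ by Definition~\ref{de:SignedChoquet}. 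Nonconstancy of $f$ then forces some $v(S)=f(\bfone_S)\neq 0$.

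The case $I\subseteq\R_-$ is handled dually: Theorem~\ref{thm:wqer87we}(iii) is now applied with $p=n$, producing a decomposition along the chains $S_\sigma^\downarrow(i)$, and the homogeneity condition takes the form $f(x\bfone_S)=-\varphi(x){\,}f(-\bfone_S)$ for $x\in I_-$. The main subtlety of the proof lies here: since the domain does not contain the points $\bfone_S$, one must identify the correct signed capacity via the values $f(-\bfone_S)$, using the complement identity $S_\sigma^\downarrow(i)=X\setminus S_\sigma^\uparrow(i+1)$ and a constant shift (e.g.\ $v(S)=f(-\bfone_{X\setminus S})-f(-\bfone_X)$, so that $v(\varnothing)=0$) to match Definition~\ref{de:SignedChoquet}. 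Apart from this bookkeeping, no essential obstacle arises beyond those already handled in the positive case.
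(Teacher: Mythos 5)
Your proposal is correct: both directions go through, including the one genuinely delicate point (recovering the signed capacity in the negative case via $v(S)=f(-\bfone_{X\setminus S})-f(-\bfone_X)$, which indeed yields $v(\varnothing)=0$ and matches the telescoping differences after the complement identity $S_{\sigma}^{\downarrow}(i)=X\setminus S_{\sigma}^{\uparrow}(i+1)$), and the side condition $f(\bfone_S)\neq 0$ (resp.\ $f(-\bfone_S)\neq 0$) is exactly what guarantees $\psi(1)>0$ (resp.\ $\psi(-1)<0$) so that $\varphi=\psi/\psi(1)$ is well defined and nondecreasing. The paper itself only recalls this theorem from \cite{CouMar11b} without proof, but your argument is essentially the same strategy the paper carries out in detail for Theorems~\ref{thm:CModSCI} and~\ref{thm:CModSSCI} (apply Theorem~\ref{thm:wqer87we}(iii), substitute the homogeneity condition, and read off the Choquet formula), so there is nothing further to add.
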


\begin{remark}
If $I=[0,1]$ (resp.\ $I=[-1,0]$), then the ``nonconstant'' assumption and the second condition in assertion $(i)$ of
Theorem~\ref{thm:sdfa5dsa76} can be dropped off.
\end{remark}

The extension of Theorem~\ref{thm:sdfa5dsa76} to functions on intervals $I$ centered at $0$ and containing $[-1,1]$ remains an interesting open problem.

We now recall the axiomatization obtained by the authors of the class of $n$-variable symmetric signed quasi-Choquet integrals.

\begin{definition}
Assume $I$ is centered at $0$ and let $v$ be a signed capacity on $X$. A \emph{symmetric signed quasi-Choquet integral} with respect to $v$ is a function $f\colon I^n\to\R$ defined as $f(\bfx)=\check{C}_v(\varphi(x_1),\ldots,\varphi(x_n))$, where $\varphi\colon I\to\R$ is a nondecreasing odd function.
\end{definition}

\begin{theorem}[{\cite{CouMar11b}}]\label{thm:CanonicalSymQuasi-Lovasz}
Assume that $I$ is centered at $0$ with $[-1,1]\subseteq I$ and let $f\colon I^n\to\R$ be a function such that $f|_{I_+^n}$ or $f|_{I_-^n}$ is
nonconstant and $f(\bfzero)=0$. Then the following assertions are equivalent.
\begin{enumerate}
\item[$(i)$] $f$ is a symmetric signed quasi-Choquet integral and there exists $S\subseteq X$ such that
$f(\mathbf{1}_S)\neq 0$.

\item[$(ii)$] $f$ is comonotonically modular and there exists a nondecreasing odd function $\varphi\colon I\to\R$ such that $f(x\bfone_S)=\varphi(x){\,}f(\bfone_S)$ for every $x\in I$ and every $S\subseteq X$.
  %
%\item[$(iii)$] There is a nondecreasing odd function $\varphi_f\colon I\to\R$ satisfying $\varphi_f(1)=1$ such that
%$f=C_{f|_{\B^n}}\circ\varphi_f$.
\end{enumerate}
\end{theorem}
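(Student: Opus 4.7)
The plan is to follow the template of the proofs of Theorems~\ref{thm:CModSCI} and~\ref{thm:CModSSCI}, but substituting $\varphi(x_{\sigma(i)})$ for $x_{\sigma(i)}$ throughout. Set $v(S)=f(\bfone_S)$ throughout.

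For necessity $(i)\Rightarrow(ii)$, I would assume $f(\bfx)=\check{C}_v(\varphi(x_1),\ldots,\varphi(x_n))$ for some signed capacity $v$ and some nondecreasing odd $\varphi$. The homogeneity in condition $(ii)$ is then immediate: since $\varphi(0)=0$ and $\check{C}_v(y\bfone_S)=y\,\check{C}_v(\bfone_S)$ by Remark~\ref{rem:saddsf687a}, one gets $f(x\bfone_S)=\varphi(x)\,\check{C}_v(\bfone_S)=\varphi(x)\,f(\bfone_S)$. For comonotonic modularity, I would use that nondecreasingness of $\varphi$ forces componentwise $\varphi$ to commute with $\wedge$ and $\vee$ and to preserve each comonotonicity region $\R^n_\sigma$; then comonotonic modularity of $\check{C}_v$ established in Theorem~\ref{thm:CModSSCI} transfers directly to $f$.

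For sufficiency $(ii)\Rightarrow(i)$, I would first observe that $f(\bfone_S)\neq 0$ for at least one $S\subseteq X$: otherwise the homogeneity condition yields $f(x\bfone_S)=0$ for every $x\in I$ and $S\subseteq X$, and Theorem~\ref{thm:wqer87we}(iii) applied to the comonotonically modular $f$ with $f(\bfzero)=0$ would force $f\equiv 0$, contradicting the nonconstancy hypothesis on $f|_{I_+^n}$ or $f|_{I_-^n}$. I would then define a signed capacity $v$ on $X$ by $v(S)=f(\bfone_S)$ (so $v(\varnothing)=0$), and expand $f(\bfx)$ on each region $\R^n_\sigma$ via Theorem~\ref{thm:wqer87we}(iii), using condition $(ii)$ to replace each $f(x_{\sigma(i)}\bfone_T)$ by $\varphi(x_{\sigma(i)})\,v(T)$. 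Comparison with formula~(\ref{eq:sdfsfd65dsf}) applied to $\check{C}_v(\varphi(x_1),\ldots,\varphi(x_n))$ then yields $f=\check{C}_v\circ\varphi$ componentwise.

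The main technical subtlety — arguably the only nontrivial step — is that the split index $p$ in Theorem~\ref{thm:wqer87we}(iii) is determined by the sign pattern of $\bfx$, whereas the analogous split $q$ in formula~(\ref{eq:sdfsfd65dsf}) applied to $\varphi(\bfx)$ is determined by the sign pattern of $\varphi(\bfx)$. Since $\varphi$ is nondecreasing and odd, one only has $q\leq p$, with strict inequality possible when $\varphi$ sends some strictly negative $x_{\sigma(i)}$ to $0$. However, on the indices $i\in\{q+1,\ldots,p\}$ the factor $\varphi(x_{\sigma(i)})$ vanishes, so the corresponding terms contribute zero in both expansions, and the two formulas agree term-by-term. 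This reconciles the comparison and completes the identification $f=\check{C}_v\circ\varphi$.
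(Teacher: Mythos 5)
The paper itself gives no proof of this theorem---it is quoted from \cite{CouMar11b}---so the comparison can only be against the template the paper uses for the analogous results it does prove. Your argument is correct and follows exactly that template (as in Theorems~\ref{thm:CModSCI} and~\ref{thm:CModSSCI}: expand $f$ on each $I^n_\sigma$ via Theorem~\ref{thm:wqer87we}(iii), substitute the homogeneity hypothesis, and compare with (\ref{eq:sdfsfd65dsf})), and you correctly isolate and dispose of the only delicate point, namely that the split index for $\bfx$ and the one for $\varphi(\bfx)$ may differ, but only at indices where $\varphi(x_{\sigma(i)})=0$. One small slip in the necessity direction: from $f=\check{C}_v\circ\varphi$ you get $f(x\bfone_S)=\varphi(x)\,C_v(\bfone_S)$ while $f(\bfone_S)=\varphi(1)\,C_v(\bfone_S)$, so the identity $f(x\bfone_S)=\varphi(x)\,f(\bfone_S)$ as you wrote it presumes $\varphi(1)=1$; since the second clause of~$(i)$ forces $\varphi(1)>0$, you should pass to $\varphi/\varphi(1)$ (still nondecreasing and odd) as the function witnessing condition~$(ii)$.
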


\begin{remark}
If $I=[-1,1]$, then the ``nonconstant'' assumption and the second condition in assertion $(i)$ of Theorem~\ref{thm:CanonicalSymQuasi-Lovasz} can
be dropped off.
\end{remark}

\subsection{Comonotonically modular integrals generalizing Sugeno integrals}

In this subsection we consider natural extensions of the $n$-variable Sugeno integrals on a bounded real interval $I=[a,b]$.
By an $I$-\emph{valued capacity} on $X$ we mean an order preserving mapping $\mu\colon 2^X\to I$ such that
$\mu(\varnothing)=a$ and $\mu(X)=b$.

\begin{definition}
Assume that $I=[a,b]$. The \emph{Sugeno integral with respect to an $I$-valued capacity $\mu$} on $X$ is the function
$\mathcal{S}_\mu\colon I^n\to I$ defined as
$$
\mathcal{S}_\mu(\bfx)~=~ \bigvee_{i\in X}x_{\sigma(i)}\wedge \mu(S^{\uparrow}_\sigma(i)),\qquad \bfx \in I^n_\sigma,~\sigma\in\mathfrak{S}_X.
$$
\end{definition}

As the following proposition suggests, Sugeno integrals can be viewed as idempotent ``lattice  polynomial functions''
(see \cite{Marc}).

\begin{proposition}
Assume that $I=[a,b]$. A function $f\colon I^n\to I$ is a Sugeno integral if and only if $f(\mathbf{e}_{\varnothing})=a$, $f(\mathbf{e}_X)=b$, and for every $\bfx\in I^n$
 $$
 f(\bfx)~=~\bigvee_{S\subseteq X}f(\mathbf{e}_S)\wedge\bigwedge_{i\in S}x_i{\,}.
 $$
\end{proposition}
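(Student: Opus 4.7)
The plan is to argue the equivalence in two directions, and the key technical content reduces to a single classical identity: for any $I$-valued capacity $\mu$ on $X$ and any $\bfx\in I^n$,
$$
\mathcal{S}_\mu(\bfx)~=~\bigvee_{S\subseteq X}\mu(S)\wedge\bigwedge_{i\in S}x_i.
$$
Once this ``disjunctive normal form'' of the Sugeno integral is established, together with the fact that $\mathcal{S}_\mu(\mathbf{e}_S)=\mu(S)$ for every $S\subseteq X$, both directions of the proposition become essentially bookkeeping.

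For necessity, I would first observe that $\mathcal{S}_\mu(\mathbf{e}_\varnothing)=\mu(\varnothing)=a$ and $\mathcal{S}_\mu(\mathbf{e}_X)=\mu(X)=b$ directly from the definition, and more generally that $\mathcal{S}_\mu(\mathbf{e}_S)=\mu(S)$ for every $S\subseteq X$. To obtain the displayed formula it then suffices to verify the identity above. Fix $\bfx\in I^n_\sigma$. On the ``$\ge$'' side, for each $i$ one takes $S=S^\uparrow_\sigma(i)$ and notes that $\bigwedge_{j\in S^\uparrow_\sigma(i)}x_j=x_{\sigma(i)}$ since $x_{\sigma(i)}$ is the smallest component of $\bfx$ indexed by $S^\uparrow_\sigma(i)$, recovering the defining expression of $\mathcal{S}_\mu(\bfx)$. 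For the ``$\le$'' side, given any nonempty $S\subseteq X$, let $i^*=\min\{i : \sigma(i)\in S\}$; then $\bigwedge_{j\in S}x_j=x_{\sigma(i^*)}$ and $S\subseteq S^\uparrow_\sigma(i^*)$ yields $\mu(S)\le\mu(S^\uparrow_\sigma(i^*))$, so the term indexed by $S$ is dominated by the one indexed by $S^\uparrow_\sigma(i^*)$. (The $S=\varnothing$ term gives $a\wedge b=a$ under the usual convention $\bigwedge_{i\in\varnothing}x_i=b$, which is absorbed.)

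For sufficiency, assume the displayed formula holds with $f(\mathbf{e}_\varnothing)=a$ and $f(\mathbf{e}_X)=b$, and define $\mu\colon 2^X\to I$ by $\mu(S)=f(\mathbf{e}_S)$. First I would show that $\mu$ is a capacity. Evaluating the hypothesis at $\bfx=\mathbf{e}_S$ and observing that $\bigwedge_{i\in U}(\mathbf{e}_S)_i$ equals $b$ when $\varnothing\ne U\subseteq S$ and $a$ otherwise yields $f(\mathbf{e}_S)=\bigvee_{U\subseteq S}f(\mathbf{e}_U)$, from which monotonicity of $\mu$ is immediate; combined with $\mu(\varnothing)=a$ and $\mu(X)=b$, this makes $\mu$ an $I$-valued capacity. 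Substituting $f(\mathbf{e}_S)=\mu(S)$ into the hypothesis and invoking the identity proved above then gives $f(\bfx)=\mathcal{S}_\mu(\bfx)$ for every $\bfx\in I^n$.

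The only step that is more than routine is the ``ordered form $=$ DNF form'' identity for $\mathcal{S}_\mu$; everything else is a direct substitution. Care must be taken with the empty-meet convention, but otherwise the argument is self-contained and does not require invoking Theorem~\ref{thm:ComMaxMin} or Corollary~\ref{cor:ComMaxMin}, although these provide an alternative route via the comonotonic maxitivity of $\mathcal{S}_\mu$.
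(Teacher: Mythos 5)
Your proof is correct. Note, however, that the paper itself gives no proof of this proposition: it is stated as a known fact, with a pointer to the literature on weighted lattice polynomials (the reference \cite{Marc}), so there is no in-paper argument to compare yours against. Your route is the standard one: establish the ``disjunctive normal form'' identity $\mathcal{S}_\mu(\bfx)=\bigvee_{S\subseteq X}\mu(S)\wedge\bigwedge_{i\in S}x_i$ by a two-sided comparison on each region $I^n_\sigma$, observe $\mathcal{S}_\mu(\mathbf{e}_S)=\mu(S)$, and for sufficiency recover monotonicity of $S\mapsto f(\mathbf{e}_S)$ by evaluating the hypothesized formula at $\bfx=\mathbf{e}_S$. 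All the steps check out: in the ``$\leqslant$'' direction the choice $i^*=\min\{i:\sigma(i)\in S\}$ does give $\bigwedge_{j\in S}x_j=x_{\sigma(i^*)}$ and $S\subseteq S^{\uparrow}_{\sigma}(i^*)$, and your handling of the empty-meet convention (so that the $S=\varnothing$ term contributes only $a$ and is absorbed) is the one needed for the formula as displayed. The one point worth making explicit in a written-up version is that the monotonicity argument for $\mu$ in the sufficiency direction uses $f(\mathbf{e}_U)\wedge a=a$, which relies on $f$ taking values in $[a,b]$; this is guaranteed by the codomain $I$ in the statement. As a bonus, your DNF identity also shows that the defining expression of $\mathcal{S}_\mu$ is independent of the choice of $\sigma$ when $\bfx$ has tied components, which the definition by cases over $I^n_\sigma$ leaves implicit.
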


As mentioned, the properties of comonotonic maxitivity and comonotonic minitivity were introduced by
de Campos et al.\ in \cite{deCBol92} to axiomatize the class of Sugeno integrals. However,
without further assumptions, they define a wider class of functions that we now define.

\begin{definition}
Assume that $I=[a,b]$ and $J=[c,d]$ are real intervals and let $\mu$ be an $I$-valued capacity on $X$.
A \emph{quasi-Sugeno integral} with respect to $\mu$ is a function $f\colon J^n\to I$
defined by $f(\bfx)=\mathcal{S}_{\mu}(\varphi(x_1),\ldots,\varphi(x_n))$, where $\varphi\colon J\to I$ is a nondecreasing function.
\end{definition}

Using Proposition 11 and Corollary 17 in \cite{CouMar10}, we obtain the
following axiomatization of the class of quasi-Sugeno integrals.

\begin{theorem}\label{thm:quasi-SugMaxMin}
Let $I=[a,b]$ and $J=[c,d]$ be real intervals and consider a function $f\colon J^n\to I$.
The following assertions are equivalent.
\begin{enumerate}
\item[$(i)$] $f$ is a quasi-Sugeno integral.
\item[$(ii)$] $f$ is comonotonically maxitive and comonotonically minitive.
\item[$(iii)$] $f$ is nondecreasing, and there exists a nondecreasing function $\varphi\colon J\to I$
such that for every $\bfx\in J^n$ and $r\in J$, we have
\begin{equation}\label{minHom2}
f(r\vee \bfx) =  \varphi(r) \vee f(\bfx) \qquad \text{and}\qquad
f(r\wedge \bfx) = \varphi(r)\wedge f(\bfx),
\end{equation}
where $r\vee \bfx$ (resp.\ $r\wedge \bfx$) is the $n$-tuple whose $i$th component is $r\vee x_i$ (resp.\ $r\wedge x_i$). In this case, $\varphi$ can be chosen as $\varphi(x)=f(x,\ldots,x)$.
\end{enumerate}
\end{theorem}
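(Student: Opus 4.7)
The plan is to establish the cycle of implications $(i)\Rightarrow(ii)\Rightarrow(iii)\Rightarrow(i)$.

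For $(i)\Rightarrow(ii)$, I would start from $f=\mathcal{S}_\mu\circ(\varphi,\ldots,\varphi)$ with $\varphi$ nondecreasing. Since any nondecreasing map satisfies $\varphi(x\vee y)=\varphi(x)\vee\varphi(y)$ and $\varphi(x\wedge y)=\varphi(x)\wedge\varphi(y)$, the transformation $(\varphi,\ldots,\varphi)$ commutes componentwise with $\vee$ and $\wedge$ and sends comonotonic pairs in $J^n$ to comonotonic pairs in $I^n$. It therefore reduces to showing that $\mathcal{S}_\mu$ itself is comonotonically maxitive and minitive, which is immediate from its max-of-min formula on each region $I^n_\sigma$ together with the distributivity of the lattice operations.

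For $(ii)\Rightarrow(iii)$, nondecreasing monotonicity of $f$ is supplied by the remark in Section~\ref{comodularity} that either of comonotonic maxitivity and comonotonic minitivity alone already implies nondecreasing monotonicity. The key observation for (\ref{minHom2}) is that a constant tuple $r\bfone$ lies in every $J^n_\tau$ and is hence comonotonic with every $\bfx\in J^n$; applying comonotonic maxitivity and minitivity with $\bfx'=r\bfone$ then yields exactly $f(r\vee\bfx)=\varphi(r)\vee f(\bfx)$ and $f(r\wedge\bfx)=\varphi(r)\wedge f(\bfx)$ upon setting $\varphi(r):=f(r,\ldots,r)$, a map which is nondecreasing because $f$ is.

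The main obstacle is $(iii)\Rightarrow(i)$, and here I would invoke Proposition~11 and Corollary~17 of \cite{CouMar10}, as signalled in the theorem's preamble. The idea is to peel off $\bfx\in J^n_\sigma$ layer by layer as a combination of suprema and infima of constant tuples through repeated applications of (\ref{minHom2}), producing a representation of the form $f(\bfx)=\bigvee_{S\subseteq X}\mu(S)\wedge\bigwedge_{i\in S}\varphi(x_i)$ for an appropriate set function $\mu\colon 2^X\to I$. The delicate step—and the reason for relying on the external results rather than carrying out the construction by hand—is to guarantee that $\mu$ is a genuine $I$-valued capacity with the boundary values $\mu(\varnothing)=a$ and $\mu(X)=b$ even when $\varphi$ is not surjective onto $[a,b]$; the two cited results supply precisely this through a complementary argument that combines both halves of (\ref{minHom2}) with the nondecreasing monotonicity.
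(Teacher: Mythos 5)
Your proposal is correct and follows essentially the same route as the paper, which offers no written proof of this theorem beyond deferring to Proposition~11 and Corollary~17 of \cite{CouMar10} for the substantive direction $(iii)\Rightarrow(i)$. Your explicit arguments for $(i)\Rightarrow(ii)$ (nondecreasing unary maps are lattice homomorphisms on a chain and preserve comonotonicity, reducing to the standard comonotonic maxitivity/minitivity of $\mathcal{S}_\mu$) and for $(ii)\Rightarrow(iii)$ (constant tuples are comonotonic with everything, so take $\varphi(r)=f(r,\ldots,r)$) are exactly the standard ones implicit in the paper's citation.
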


\begin{remark}
The two conditions given in (\ref{minHom2}) are referred to in \cite{CouMar10} as  \emph{quasi-max homogeneity} and
\emph{quasi-min homogeneity}, respectively.
\end{remark}

As observed at the end of the previous section, condition $(ii)$ (and hence $(i)$ or $(iii)$) of Theorem \ref{thm:quasi-SugMaxMin}
implies comonotonic modularity. As the following result shows, the converse is true whenever $f$ is nondecreasing
and verifies any of the following weaker variants of quasi-max homogeneity and quasi-min homogeneity:
\begin{eqnarray}
f(x\vee\mathbf{e}_{S}) &=& f(x,\ldots,x)\vee f(\mathbf{e}_S),\qquad x\in J,~S\subseteq  X,\label{weakMaxHom}\\
f(x\wedge\mathbf{e}_{S}) &=& f(x,\ldots,x)\wedge f(\mathbf{e}_S),\qquad x\in J,~S\subseteq  X.\label{weakMinHom}
\end{eqnarray}

\begin{theorem}\label{thm:quasi-Mod}
Let $I=[a,b]$ and $J=[c,d]$ be real intervals and consider a function $f\colon J^n\to I$. The following conditions are equivalent.
\begin{enumerate}
\item[$(i)$] $f$ is a quasi-Sugeno integral, $f(\bfx)=\mathcal{S}_{\mu}(\varphi(x_1),\ldots,\varphi(x_n))$, where $\varphi(x)=f(x,\ldots,x)$.

\item[$(ii)$] $f$ is a quasi-Sugeno integral.% One (or, equivalently, all) of the conditions of Theorem~\ref{thm:quasi-SugMaxMin} holds.

\item[$(iii)$] $f$ is comonotonically modular, nondecreasing, and satisfies (\ref{weakMaxHom}) or (\ref{weakMinHom}).

\item[$(iv)$] $f$ is nondecreasing and satisfies (\ref{weakMaxHom}) and (\ref{weakMinHom}).
\end{enumerate}
\end{theorem}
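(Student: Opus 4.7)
My plan is to establish the cycle $(i)\Leftrightarrow(ii)\Rightarrow(iv)\Rightarrow(iii)\Rightarrow(i)$.

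The implications $(i)\Rightarrow(ii)$ and $(ii)\Rightarrow(i)$ follow directly from Theorem~\ref{thm:quasi-SugMaxMin}: the first is trivial from the definition of a quasi-Sugeno integral, while the second is the ``$\varphi$ can be chosen as $\varphi(x)=f(x,\ldots,x)$'' clause in Theorem~\ref{thm:quasi-SugMaxMin}$(iii)$. For $(ii)\Rightarrow(iv)$, Theorem~\ref{thm:quasi-SugMaxMin}$(iii)$ already gives nondecreasing monotonicity together with the full homogeneities (\ref{minHom2}); specializing (\ref{minHom2}) at $\bfx=\mathbf{e}_S$ with $r=x$ produces (\ref{weakMaxHom}) and (\ref{weakMinHom}) at once.

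For $(iv)\Rightarrow(iii)$, I would upgrade (\ref{weakMaxHom}) and (\ref{weakMinHom}) to the full quasi-max and quasi-min homogeneities (\ref{minHom2}) with $\varphi(x)=f(x,\ldots,x)$. Given $\bfx\in J^n_\sigma$ and $r\in J$, the inequality $f(r\vee\bfx)\geqslant \varphi(r)\vee f(\bfx)$ is immediate from monotonicity; for the reverse, one bounds $r\vee\bfx$ above by the vector $r\vee\mathbf{e}_S$ with $S=\{i:x_i>r\}$ and iterates the local relations given by (\ref{weakMaxHom}) to match the supremum. Once (\ref{minHom2}) is established, Theorem~\ref{thm:quasi-SugMaxMin} delivers that $f$ is a quasi-Sugeno integral, hence comonotonically maxitive and minitive by Theorem~\ref{thm:quasi-SugMaxMin}$(ii)$, and the observation at the end of Section~\ref{comodularity} then yields comonotonic modularity.

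The substantive implication is $(iii)\Rightarrow(i)$. Assuming $f$ is comonotonically modular, nondecreasing, and satisfies (\ref{weakMaxHom}) (the case with (\ref{weakMinHom}) being dual), set $\varphi(x)=f(x,\ldots,x)$ and $\mu(S)=f(\mathbf{e}_S)$. The plan is: first, use the comonotonically-modular-iff-comonotonically-separable characterization recalled after Theorem~\ref{thm:wqer87we} to decompose $f(\bfx)=\sum_{i=1}^n f_i^\sigma(x_{\sigma(i)})$ on each $J^n_\sigma$, which along the diagonal yields $\sum_i f_i^\sigma=\varphi$; second, use (\ref{weakMaxHom}) at each scalar $x\in J$ with the corner sets $S=S_\sigma^\uparrow(i)$ to pin down each component $f_i^\sigma$ as a truncated piece of $\varphi$ with threshold determined by $\mu(S_\sigma^\uparrow(i))$; finally, telescope the resulting sum and recognize the sup--min expression $\bigvee_i \varphi(x_{\sigma(i)})\wedge \mu(S_\sigma^\uparrow(i))$, which equals $\mathcal{S}_\mu(\varphi(x_1),\ldots,\varphi(x_n))$.

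The main obstacle is the ``sum-to-sup--min'' collapse in this last step: the additive form provided by comonotonic modularity must be squeezed into the sup--min form of a Sugeno integral. The mechanism is that nondecreasingness together with (\ref{weakMaxHom}) force each $f_i^\sigma$ to be constant outside a single interval of $J$ delimited by the threshold $\mu(S_\sigma^\uparrow(i))$, so that in any subregion of $J^n_\sigma$ cut out by these thresholds only one summand is ``active'' and the sum collapses to a maximum.
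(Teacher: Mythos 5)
Your cycle hinges on the implication $(iv)\Rightarrow(iii)$, and that is where the proposal breaks down. To get comonotonic modularity out of $(iv)$ you propose to upgrade the weak homogeneities \eqref{weakMaxHom}--\eqref{weakMinHom} to the full quasi-homogeneities \eqref{minHom2} and then invoke Theorem~\ref{thm:quasi-SugMaxMin}. But the one concrete step you give does not work: with $S=\{i: x_i>r\}$ the bound $f(r\vee\bfx)\leqslant f(r\vee\mathbf{e}_S)=\varphi(r)\vee f(\mathbf{e}_S)$ overshoots, since $f(\mathbf{e}_S)$ can strictly exceed $\varphi(r)\vee f(\bfx)$ (already for $f=\max$, $n=2$, $\bfx=(x_1,c)$ with $x_1>r$, one gets $f(r\vee\mathbf{e}_S)=d$ while $\varphi(r)\vee f(\bfx)=x_1$). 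The phrase ``iterates the local relations'' is doing all the work and is not specified; in fact, deriving \eqref{minHom2} from the weak versions is essentially equivalent to establishing the sup--min representation itself, so this step is circular in spirit. The paper closes the cycle the other way: $(iii)\Rightarrow(iv)$ is a one-line consequence of comonotonic modularity applied to the comonotonic pair $(x,\ldots,x)$ and $\mathbf{e}_S$, and the hard implication is $(iv)\Rightarrow(i)$, proved directly by a combinatorial argument (choose $S^*$ maximizing $f(\mathbf{e}_{S^*})\wedge\bigwedge_{i\in S^*}\varphi(x_i)$, show the set $T$ of indices with small $\varphi(x_j)$ is nonempty, and sandwich $f(\bfx)$ between $f(\mathbf{e}_{X\setminus T})$ and the target) using only monotonicity and \emph{both} weak homogeneities --- no comonotonic modularity needed. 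As it stands, your condition $(iv)$ is never connected back to the rest of the theorem.

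Your treatment of the remaining implications is fine ($(i)\Leftrightarrow(ii)$ and $(ii)\Rightarrow(iv)$ do follow from Theorem~\ref{thm:quasi-SugMaxMin} as you say), and your plan for $(iii)\Rightarrow(i)$ is a genuinely different route from the paper's: the paper never uses the comonotonically separable decomposition, whereas you extract $f_i^{\sigma}$ from the additive form, pin it down via \eqref{weakMaxHom} at the tuples $x\vee\mathbf{e}_{S_{\sigma}^{\uparrow}(i)}$, and telescope. This can be made to work (the sum collapses to $\varphi(x_{\sigma(k)})\vee\mu(S_{\sigma}^{\uparrow}(k+1))$ at the crossing index $k$, which equals the Sugeno expression), and it would be a nice answer to the ``additive form versus max--min form'' question raised in the paper's conclusion; but as written the decisive collapse is only asserted, and even granting it, the theorem remains unproved until $(iv)$ is tied in --- either by supplying the missing $(iv)\Rightarrow(iii)$ argument or by switching to the paper's $(iii)\Rightarrow(iv)\Rightarrow(i)$ ordering.
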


\begin{proof}
$(i)\Rightarrow (ii)$ Trivial.

$(ii)\Rightarrow (iii)$ Follows from Theorem~\ref{thm:quasi-SugMaxMin}.

$(iii)\Rightarrow (iv)$ Suppose that $f$ is comonotonically modular and satisfies (\ref{weakMaxHom}). Then,
\begin{eqnarray*}
f(x\wedge\mathbf{e}_S) &=& f(x,\ldots,x) + f(\mathbf{e}_S) - f(x\vee\mathbf{e}_S)\\
&=& f(x,\ldots,x) + f(\mathbf{e}_S) - f(x,\ldots,x)\vee f(\mathbf{e}_S)  ~=~ f(x,\ldots,x)\wedge f(\mathbf{e}_S).
\end{eqnarray*}
Hence $f$ satisfies (\ref{weakMinHom}). The other case can be dealt with dually.

$(iv)\Rightarrow (i)$ Define $\varphi(x)=f(x,\ldots,x)$. By nondecreasing monotonicity and (\ref{weakMinHom}), for every $S\subseteq X$ we have
\begin{eqnarray*}
f(\vect{x}) ~ \geqslant ~ f\Big(\vect{e}_S \wedge \bigwedge_{i\in S}x_i\Big) ~=~
f(\vect{e}_S) \wedge \varphi\Big(\bigwedge_{i\in S}x_i\Big)~=~f(\vect{e}_S) \wedge \bigwedge_{i\in S}\varphi(x_i)
\end{eqnarray*}
and thus $f(\vect{x})\geqslant \bigvee_{S\subseteq X} f(\vect{e}_S) \wedge  \bigwedge_{i\in S}\varphi(x_i)$.
To complete the proof, it is enough to establish the converse inequality. Let $S^*\subseteq X$ be such that
$f(\vect{e}_{S^*}) \wedge \bigwedge_{i\in S^*}\varphi(x_i)$ is maximum. Define
$$
T=\Big\{j\in X: \varphi(x_j)\leqslant f(\vect{e}_{S^*}) \wedge\bigwedge_{i\in S^*}\varphi(x_i)\Big\}.
$$
We claim that $T\neq\varnothing$. Suppose this is not true, that is,
$\varphi(x_j)> f(\vect{e}_{S^*}) \wedge\bigwedge_{i\in S^*}\varphi(x_i)$
for every $j\in X$. Then, by nondecreasing monotonicity, we have
$f(\vect{e}_{X})\geqslant f(\vect{e}_{S^*})$, and since
$f(\vect{e}_{X})\geqslant \bigwedge_{i\in X}\varphi(x_i)$,
$$
f(\vect{e}_{X}) \wedge \bigwedge_{i\in X}\varphi(x_i) ~>~ f(\vect{e}_{S^*}) \wedge\bigwedge_{i\in S^*}\varphi(x_i)
$$
which contradicts the definition of $S^*$. Thus $T\neq \varnothing$.

Now, by nondecreasing monotonicity and (\ref{weakMaxHom})  we have
$$
f(\vect{x}) ~ \leqslant ~  f\Big(\vect{e}_{X\setminus T}\vee\bigvee_{j\in T}x_j\Big) ~=~ f(\vect{e}_{X\setminus T})\vee\varphi\Big(\bigvee_{j\in T}x_j\Big) ~=~  f(\vect{e}_{X\setminus T})\vee\bigvee_{j\in T}\varphi(x_j) ~=~  f(\vect{e}_{X\setminus T}).
$$
Indeed, we have $\varphi(x_j)\leqslant f(\vect{x})$ for every $j\in T$ and $\vect{x} \leqslant
\vect{e}_{X\setminus T}\vee\bigvee_{j\in T}x_j$.

Note that $f(\vect{e}_{X\setminus T})\leqslant f(\vect{e}_{S^*}) \wedge \bigwedge_{i\in S^*}\varphi(x_i)$ since
 otherwise, by definition of $T$, we would have
$$
f(\vect{e}_{X\setminus T})\wedge \bigwedge_{i\in X\setminus T}\varphi(x_i) ~ >
~ f(\vect{e}_{S^*}) \wedge \bigwedge_{i\in S^*}\varphi(x_i),
$$
again contradicting the definition of $S^*$. Finally,
$$
f(\vect{x}) ~ \leqslant ~f(\vect{e}_{S^*}) \wedge \bigwedge_{i\in S^*}\varphi(x_i) ~=~
\bigvee_{S\subseteq X} f(\vect{e}_S) \wedge
 \bigwedge_{i\in S}\varphi(x_i),
$$
and the proof is thus complete.
\end{proof}

\begin{remark}
An axiomatization of the class of Sugeno integrals based on comonotonic modularity can be obtained from Theorems~\ref{thm:quasi-SugMaxMin} and \ref{thm:quasi-Mod} by adding the idempotency property.
\end{remark}

%---------------------------------------------------------------------------------------------- Section 5
\section{Conclusion}

In this paper we analyzed comonotonic modularity as a feature common to many well-known discrete integrals.
In doing so, we established its relation to many other noteworthy aggregation properties such as comonotonic relaxations of additivity, maxitivity and minitivity. In fact, the latter become equivalent in presence of comonotonic modularity. As a by-product we immediately see that, e.g., the so-called discrete Shilkret integral lies outside the class of comonotonic modular functions since this integral is comonotonically maxitive but not comonotonically minitive.

Albeit such an example, the class of comonotonically modular functions is
rather vast and includes several important extensions of the Choquet and Sugeno integrals. The results presented in Section~\ref{COMDinteg} seem to indicate that suitable variants of homogeneity suffice to distinguish and fully describe these extensions. This naturally asks for an exhaustive study of homogeneity-like properties, which may lead to a complete classification of all subclasses of comonotonically modular functions.

Another question that still eludes us is the relation between the additive forms given by comonotonic modularity and the max-min forms. As shown in Theorem~\ref{thm:quasi-Mod}, the latter are particular instances of the former; in fact, proof of Theorem~\ref{thm:quasi-Mod} provides a procedure to construct max-min representations of comonotonically modular functions, whenever they exist. However, we were not able to present a direct translation between the two. This remains as a relevant open question since its answer will inevitably provide a better understanding of the synergy between these intrinsically different normal forms.

%---------------------------------------------------------------------------------------------- Acknowledgments
\section*{Acknowledgments}

This research is partly supported by the internal research project F1R-MTH-PUL-12RDO2 of the University of Luxembourg.

\end{document}